\theoremstyle{plain}
\newtheorem{theorem}{Theorem}[section]
\newtheorem{lemma}[theorem]{Lemma}
\theoremstyle{definition}
\newtheorem{remark}[theorem]{Remark}
\newtheorem{example}[theorem]{Example}
\theoremstyle{remark}
\begin{document}

\title [Zeros of a polynomial and numerical radius inequalities]{Estimations of zeros of a polynomial using numerical radius inequalities  } 

\author{Pintu Bhunia, Santanu Bag, Raj Kumar Nayak and Kallol Paul }

\address{(Bhunia) Department of Mathematics, Jadavpur University, Kolkata 700032, India}
\email{pintubhunia5206@gmail.com}

\address{(Bag) Department of Mathematics, Vivekananda College For Women, Barisha, Kolkata 700008, India}
\email{santanumath84@gmail.com}

\address{(Nayak) Department of Mathematics, Jadavpur University, Kolkata 700032, India}
\email{rajkumarju51@gmail.com}

\address{(Paul) Department of Mathematics, Jadavpur University, Kolkata 700032, India}
\email{kalloldada@gmail.com}

\thanks{First and third author would like to thank UGC, Govt. of India for the financial support in the form of JRF. Prof. Kallol Paul would like to thank RUSA 2.0, Jadavpur University for the partial support.}


\subjclass[2010]{Primary 26C20; Secondary 47A12, 15A60.}
\keywords{ Numerical radius; zeros of polynomials, Frobenius companion matrix. }

\maketitle
\begin{abstract}
We present  new bounds for the numerical radius of bounded linear operators and $2\times 2$ operator matrices. We apply upper bounds for the numerical radius to the Frobenius companion matrix of a complex monic polynomial to obtain  new estimations for zeros of that polynomial. We also show with numerical examples that our new estimations improve on the existing estimations.
\end{abstract}

\section{Introduction}
The purpose of the present article is to present a general method to estimate the zeros of a monic polynomial. The estimation for the zeros of a polynomial have important applications in many areas of sciences such as signal processing, control theory, communication theory, coding theory and cryptography etc. To find the exact zeros of a polynomail of higher order is very difficult and there is no standard method as such.  For this reason, the  estimation of the disk containing all the zeros of a polynomial is an important area of research. Over the years many mathematicians have developed various tools to estimate the disk that contains all the zeros. We  use numerical radius inequalties of  Frobenius companion matrix associated with a given polynomial to find a disk of smaller radius  that contains all the zeros of the polynomial.  This is the time to  introduce some notations and terminologies to be  used in this article.\\

Let $\mathbb{H}_1$, $\mathbb{H}_2$ be two complex Hilbert spaces with usual inner product $\langle.,.\rangle$ and $B(\mathbb{H}_1,\mathbb{H}_2)$ denote the set of all bounded linear operators from $\mathbb{H}_1$ into $\mathbb{H}_2$. If $\mathbb{H}_1=\mathbb{H}_2=\mathbb{H}$ then we write $B(\mathbb{H}_1,\mathbb{H}_2)=B(\mathbb{H}).$ For $T\in B(\mathbb{H})$, the operator norm $\|T\|$ of $T$ is defined as : \[\|T\|=\sup \left \{\|Tx\|:x\in \mathbb{H}, \|x\|=1 \right\}.\]
For $T\in B(\mathbb{H})$, the numerical range $W(T)$, numerical radius $w(T)$ and Crawford number $m(T)$ of $T$ are defined as: 
\begin{eqnarray*} 
W(T)&=&\left\{ \langle Tx,x\rangle : x\in \mathbb{H}, \|x\|=1 \right\},\\
w(T)&=& \sup \left\{ |\mu| : \mu \in W(T) \right\},\\
m(T)&=& \inf \left\{ |\mu| : \mu \in W(T) \right\}.
\end{eqnarray*}
It is easy to verify that $w(T)$ is a norm on $B(\mathbb{H})$ and equivalent to the operator norm satisfying the following inequality 
\[\frac{1}{2}\|T\|\leq w(T)\leq \|T\|.\]
Observe that spectrum $\sigma(T)$ of $T$ is contained in the closure of the numerical range $W(T)$ of $T$, so the spectral radius $r(T)$ of $T$ always satisfies $r(T)\leq w(T).$

Let us consider a monic polynomial of degree $n$, $p(z)=z^n+a_{n-1}z^{n-1}+a_{n-2}z^{n-2}+\ldots+a_1z+a_0,$ where  the coefficients $a_i \in \mathbb{C}$ for  $i=0,1,\ldots,n-1$. The Frobenius companion matrix $C(p)$, associated with polynomial $p(z),$ is given by
$$C(p)=\left(\begin{array}{ccccc}
-a_{n-1}&-a_{n-2}&\ldots&-a_1&-a_0\\
1&0&\ldots&0&0\\
0&1&\ldots&0&0\\
\vdots&\vdots& &\vdots&\vdots\\
0&0&\ldots&1&0\\
\end{array}\right)_{n\times n}.$$
It is easy to verify that all the eigenvalues of $C(p)$ are exactly the zeros of the polynomial $p(z).$ Considering  $C(p)$ as a bounded linear operator on $\mathbb{C}^n,$ we get  $r(C(p))\leq w(C(p))$ and so  $|\lambda|\leq w(C(p)),$  where $\lambda$ is a zero of $p(z).$ If $R$ is radius of a disk with center at the origin that contains all the zeros of the polynomial, then $w(C(p))$ is one such $R$.
Over the years various mathematicians have estimated radius $R$  using various technique. Few of them are listed in below. \\ \\
(1) Abdurakhmanov \cite{A} proved that
  \[|\lambda|\leq\frac{1}{2}\left( |a_{n-1}|+\cos\frac{\pi}{n}+\sqrt{\left(|a_{n-1}|-\cos\frac{\pi}{n}\right)^2+\left(1+\sqrt{\sum^{n-2}_{j=0}|a_j|^2}\right)^2}\right)=R_A.\]
(2) Abu-Omar and Kittaneh \cite{AK1} proved that 
	     \[|\lambda| \leq \sqrt{\frac{1}{4}(|a_{n-1}|^2+\alpha)^2+\alpha+\cos^2\frac{\pi}{n+1}}=R_{AK}, \] where $\alpha=\sqrt{\sum_{j=0}^{n-1}|a_j|^2}$.\\
(3)	Bhunia et. al. \cite{BBP3} proved that
\[|\lambda|\leq |\frac{a_{n-1}}{n}|+\cos\frac{\pi}{n}+\frac{1}{2} [(1+\alpha)^2+4\alpha+4\sqrt{\alpha}(1+\alpha)]^{\frac{1}{4}}=R_{BBP},\]  
where 
\begin{eqnarray*}
\alpha_{r}&=&\sum^{n}_{k=r} {^k}C_r\big(-\frac{a_{n-1}}{n}\big)^{k-r} a_{k},  ~~ r=0,1,\ldots, n-2, a_n=1, {^0}C_0=1, \\
\alpha&=&\sum^{n-2}_{i=0}|\alpha_i|^2.
\end{eqnarray*}		
(4) Cauchy \cite{HJ}	proved that 
		   \[ |\lambda|\leq 1+\max \{ |a_0|, |a_1|, \ldots, |a_{n-1}|\}=R_C.\]
(5) Carmichael and Mason \cite{HJ} proved that 
		\[ |\lambda|\leq (1+|a_0|^2+|a_1|^2+\ldots+|a_{n-1}|^2)^{\frac{1}{2}}=R_{CM}.\]
(6) Fujii and Kubo \cite{FK} proved that 
			\[|\lambda|\leq \cos\frac{\pi}{n+1}+\frac{1}{2}\big[\big(\sum_{j=0}^{n-1}|a_j|^2\big)^{\frac{1}{2}}+|a_{n-1}|\big]=R_{FK}.\]
(7) Kittaneh \cite{K2} proved that 
   \[|\lambda|\leq\frac{1}{2}\left( |a_{n-1}|+1+\sqrt{\left(|a_{n-1}|-1\right)^2+4\sqrt{\sum^{n-2}_{j=0}|a_j|^2}}\right)=R_{K_1}.\]
(8) Kittaneh \cite{K2} proved that 
   \[|\lambda|\leq\frac{1}{2}\left( |a_{n-1}|+\cos\frac{\pi}{n}+\sqrt{\left(|a_{n-1}|-\cos\frac{\pi}{n}\right)^2+(|a_{n-2}|+1)^2+\sum^{n-3}_{j=0}|a_j|^2}\right)=R_{K_2}.\]
(9)	Montel \cite{HJ}	proved that
		\[ |\lambda|\leq \max \left\{1, \sum^{n-1}_{r=o}|a_r|\right \}=R_M.\]
			
In this article, we obtain some upper bounds for the numerical radius of bounded linear operators and operator matrices.  Using these  bounds  and the bounds obtained in \cite{BBP,BBP3,BBP1,BBP2,BPN} we obtain  bounds for the radius of the disk with centre at origin  that contains all the zeros of a complex monic polynomial. Also we show with numerical examples that these bounds obtained here  improve on the existing bounds.

\section{\textbf{Upper bounds for numerical radius}}
In this section we  obtain bounds for the numerical radius of operators which will be used to estimate zeros of polynomial in the next section.  We need the following numerical radius equality \cite{Y}.
\begin{lemma}\label{lem-1b}
Let $T\in B(\mathbb{H})$ and $H_{\theta}=\textit{Re}(e^{i\theta}T)$, where $\theta \in \mathbb{R}$. Then \[w(T)=\sup_{\theta\in \mathbb{R}}\|H_{\theta}\|.\]
\end{lemma}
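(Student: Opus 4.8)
The plan is to reduce the computation of $w(T)$ to the norms of the self-adjoint operators $H_\theta$ by passing through quadratic forms. The starting observation is that each $H_\theta = \textit{Re}(e^{i\theta}T)$ is self-adjoint, and for any self-adjoint operator $H$ one has the standard identity $\|H\| = \sup_{\|x\|=1}|\langle Hx,x\rangle|$. This holds because $\|H\| = r(H)$ for self-adjoint (indeed normal) operators, while $r(H) \leq w(H) \leq \|H\|$ always, forcing $\|H\| = w(H) = \sup_{\|x\|=1}|\langle Hx,x\rangle|$. Thus the first step is to rewrite $\sup_{\theta}\|H_\theta\|$ as a double supremum, over $\theta \in \mathbb{R}$ and over unit vectors $x \in \mathbb{H}$.

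The second step is a direct computation of the quadratic form. Writing $H_\theta = \tfrac{1}{2}(e^{i\theta}T + e^{-i\theta}T^*)$ and using $\langle T^*x,x\rangle = \overline{\langle Tx,x\rangle}$, I would obtain $\langle H_\theta x,x\rangle = \textit{Re}\big(e^{i\theta}\langle Tx,x\rangle\big)$ for every unit vector $x$. Combining this with the self-adjoint norm identity gives $\|H_\theta\| = \sup_{\|x\|=1}\big|\textit{Re}(e^{i\theta}\langle Tx,x\rangle)\big|$.

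The third step interchanges the two suprema, which is legitimate since a supremum over a product set is independent of the order, to get $\sup_{\theta}\|H_\theta\| = \sup_{\|x\|=1}\sup_{\theta}\big|\textit{Re}(e^{i\theta}\langle Tx,x\rangle)\big|$. For a fixed complex number $\mu$, as $\theta$ ranges over $\mathbb{R}$ the real number $\textit{Re}(e^{i\theta}\mu)$ sweeps out the whole interval $[-|\mu|,|\mu|]$, so $\sup_{\theta}|\textit{Re}(e^{i\theta}\mu)| = |\mu|$. Applying this with $\mu = \langle Tx,x\rangle$ collapses the inner supremum and yields $\sup_{\theta}\|H_\theta\| = \sup_{\|x\|=1}|\langle Tx,x\rangle| = w(T)$, completing the proof.

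The argument is a chain of elementary identities, so there is no genuine obstacle. The only point demanding care is the invocation of the self-adjoint norm identity $\|H\| = \sup_{\|x\|=1}|\langle Hx,x\rangle|$; this is standard and can be cited rather than reproved. One should also confirm that the suprema are attained or approximated uniformly enough to justify the interchange, but since no limit exchange beyond the trivial product-set reordering is needed, this is immediate.
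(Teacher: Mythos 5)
Your proof is correct: the reduction of $\|H_\theta\|$ to $\sup_{\|x\|=1}|\textit{Re}(e^{i\theta}\langle Tx,x\rangle)|$ via the self-adjoint norm identity, followed by the interchange of the two suprema and the observation that $\sup_\theta|\textit{Re}(e^{i\theta}\mu)|=|\mu|$, is exactly the standard argument for this equality. The paper itself gives no proof, citing the result from Yamazaki's work, and your argument is the canonical one found there, so there is nothing to add or compare.
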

Using the  Lemma we prove the following theorem.
\begin{theorem}\label{theorem-2.1}
Let $T\in B(\mathbb{H})$. Then \[w^2(T) \leq w(T^2)+\min\left\{\|\textit{Re}(T)\|^2,\|\textit{Im}(T)\|^2\right\}.\]
\end{theorem}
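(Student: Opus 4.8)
The plan is to invoke Lemma~\ref{lem-1b}, which reduces $w(T)$ to the supremum over $\theta$ of the norms of the self-adjoint operators $H_\theta=\textit{Re}(e^{i\theta}T)$. Writing $A=\textit{Re}(T)$ and $B=\textit{Im}(T)$, which are self-adjoint with $T=A+iB$, a direct computation gives $H_\theta=\cos\theta\,A-\sin\theta\,B$. Since $H_\theta$ is self-adjoint, $\|H_\theta\|^2=\|H_\theta^2\|$, so the entire problem becomes one of estimating $\|H_\theta^2\|$ uniformly in $\theta$ and then taking the supremum; squaring $w(T)$ at the end is what lets the quadratic term $w(T^2)$ appear naturally.

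The heart of the argument is an algebraic identity. Expanding yields $H_\theta^2=\cos^2\theta\,A^2-\cos\theta\sin\theta\,(AB+BA)+\sin^2\theta\,B^2$, and from $T^2=(A+iB)^2$ one reads off $\textit{Re}(T^2)=A^2-B^2$ and $\textit{Im}(T^2)=AB+BA$. The key idea is to keep one of the two squares intact and fold everything else into a single rotated component of $T^2$. I would substitute $A^2=\textit{Re}(T^2)+B^2$, which collapses the expression to
\[H_\theta^2=\cos\theta\,\textit{Re}(e^{i\theta}T^2)+B^2,\]
and symmetrically substitute $B^2=A^2-\textit{Re}(T^2)$, which yields
\[H_\theta^2=A^2-\sin\theta\,\textit{Im}(e^{i\theta}T^2).\]

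From here the estimate is routine. Lemma~\ref{lem-1b} gives $\|\textit{Re}(e^{i\theta}T^2)\|\le w(T^2)$, and since $\textit{Im}(e^{i\theta}T^2)=\textit{Re}(e^{i(\theta-\pi/2)}T^2)$ the same bound controls the imaginary part. Using the triangle inequality together with $|\cos\theta|\le 1$ and $|\sin\theta|\le 1$, the two identities give $\|H_\theta^2\|\le w(T^2)+\|B\|^2$ and $\|H_\theta^2\|\le w(T^2)+\|A\|^2$ respectively. Taking the smaller of the two for each $\theta$ and then the supremum over $\theta$ produces exactly $w^2(T)\le w(T^2)+\min\{\|\textit{Re}(T)\|^2,\|\textit{Im}(T)\|^2\}$.

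The main obstacle will be locating the right recombination in the second paragraph. The naive route, writing $H_\theta^2=\tfrac12\textit{Re}(e^{2i\theta}T^2)+\tfrac12(A^2+B^2)$ via $A^2+B^2=\tfrac12(TT^*+T^*T)$, produces the wrong constant in front of $w(T^2)$ and leaves an $\|A^2+B^2\|$ term instead of a minimum. The trick that makes the theorem come out cleanly is to retain $A^2$ (or $B^2$) untouched so that the cross term together with the remaining square assembles into $\cos\theta\,\textit{Re}(e^{i\theta}T^2)$ (or $-\sin\theta\,\textit{Im}(e^{i\theta}T^2)$), a single rotated real or imaginary part of $T^2$ that is bounded by $w(T^2)$; the scalar factor $\cos\theta$ or $\sin\theta$ then drops out harmlessly under $|\cos\theta|,|\sin\theta|\le 1$, and the preserved square supplies precisely the $\|\textit{Im}(T)\|^2$ or $\|\textit{Re}(T)\|^2$ term.
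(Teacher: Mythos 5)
Your proposal is correct and is essentially the paper's own argument: the identity $H_\theta^2=A^2-\sin\theta\,\textit{Im}(e^{i\theta}T^2)$ you derive is exactly the paper's $H_\theta^2=\sin\theta\,\textit{Re}(e^{i(\theta+\pi/2)}T^2)+(\textit{Re}(T))^2$, just reached via the Cartesian decomposition $T=A+iB$ instead of manipulating $e^{2i\theta}T^2+e^{-2i\theta}(T^*)^2+TT^*+T^*T$ directly, and both then bound the rotated component of $T^2$ by $w(T^2)$ via Lemma \ref{lem-1b} and take the supremum over $\theta$. The symmetric substitution giving the $\|\textit{Im}(T)\|^2$ version corresponds to the paper's ``similar argument'' remark, so no gap remains.
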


\begin{proof}
Let $H_{\theta}=\textit{Re}(e^{i\theta}T)$, where $\theta \in \mathbb{R}$. Then
\begin{eqnarray*}
	4H^2_{\theta}&=& e^{2i\theta}T^2+  e^{-2i\theta}{T^{*}}^2+TT^{*}+T^*T\\
	\Rightarrow 4H^2_{\theta}&=& (e^{2i\theta}-1)T^2+  (e^{-2i\theta}-1){T^{*}}^2+T^2+{T^{*}}^2+TT^{*}+T^*T\\
	\Rightarrow H^2_{\theta}&=& \frac{1}{2}\textit{Re}\left\{(e^{2i\theta}-1)T^2 \right\}+ (\textit{Re}(T))^2\\
	\Rightarrow H^2_{\theta}&=& \sin\theta\textit{Re}\left\{(e^{i(\theta+\frac{\pi}{2})}T^2 \right\}+ (\textit{Re}(T))^2\\
	\Rightarrow \|H_{\theta}\|^2&\leq& \|\textit{Re}\left\{(e^{i(\theta+\frac{\pi}{2})}T^2 \right\}\|+ \|\textit{Re}(T)\|^2\\
	&\leq& w(T^2)+\|\textit{Re}(T)\|^2, ~~\mbox{using Lemma \ref{lem-1b}}.
\end{eqnarray*}
Taking supremum over $\theta \in \mathbb{R}$ and then using Lemma \ref{lem-1b} we get 
 \[w^2(T) \leq w(T^2)+\|\textit{Re}(T)\|^2.\] 
Applying similar argument we can prove that
\[w^2(T) \leq w(T^2)+\|\textit{Im}(T)\|^2.\] 
This completes the proof of the theorem.
\end{proof}	

\begin{remark}
It follows from Theorem \ref{theorem-2.1} that if $T^2=0$ then $w(T)\leq \|\textit{Re}(T)\|$ and $w(T)\leq \|\textit{Im}(T)\|.$ From  \cite[Th. 3.3]{BBP} it follows  that for any $T\in B(\mathbb{H})$,  
$$\|\textit{Re}(T)\|^2+m^2(\textit{Im}(T))\leq w^2(T),$$ $$\|\textit{Im}(T)\|^2+m^2(\textit{Re}(T))\leq w^2(T).$$ If $T^2=0$ then we get  $w(T)= \|\textit{Re}(T)\|= \|\textit{Im}(T)\|$  and $m(\textit{Re}(T))=m(\textit{Im}(T))=0.$
Also we have from Theorem \ref{theorem-2.1} and \cite[Th. 3.3]{BBP} that for any $T\in B(\mathbb{H}),$  $m(\textit{Re}(T))\leq \sqrt{w(T^2)}$ and $m(\textit{Im}(T))\leq\sqrt{w(T^2)}$.
\end{remark}

Next we obtain an upper bound for the numerical radius of $2 \times 2$ operator matrices.

\begin{theorem}\label{theorem-2.2}
Let  $T=\left(\begin{array}{cc}
A&B \\
C&D
\end{array}\right)$, where $A\in B(\mathbb{H}_1),B\in  B(\mathbb{H}_2,\mathbb{H}_1) ,C\in  B(\mathbb{H}_1,\mathbb{H}_2),D\in  B(\mathbb{H}_2)$.
 Then 
 $$ w(T) \leq \frac{1}{2} \left[ w(A) + w(D) + \sqrt{(w(A)-w(D))^{2}+\|B\|^{2}+\|C\|^{2}+2w(CB) } \right]. $$
 
\end{theorem}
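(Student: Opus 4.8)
The plan is to follow the strategy used for Theorem~\ref{theorem-2.1}: apply the numerical radius equality of Lemma~\ref{lem-1b} to reduce the problem to a uniform estimate, over $\theta\in\mathbb{R}$, of the operator norm of the self-adjoint real part $H_\theta=\textit{Re}(e^{i\theta}T)$. Writing $T$ in block form and computing the real part block by block gives
\[
H_\theta=\begin{pmatrix}\textit{Re}(e^{i\theta}A) & X_\theta\\ X_\theta^* & \textit{Re}(e^{i\theta}D)\end{pmatrix},\qquad X_\theta=\tfrac12\bigl(e^{i\theta}B+e^{-i\theta}C^*\bigr),
\]
a self-adjoint operator on $\mathbb{H}_1\oplus\mathbb{H}_2$. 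Thus $w(T)=\sup_\theta\|H_\theta\|$, and everything reduces to bounding the norm of this Hermitian block matrix.

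First I would establish the standard norm bound for a self-adjoint $2\times2$ block operator matrix: if $P,Q$ are self-adjoint and $S=\begin{pmatrix}P & X\\ X^* & Q\end{pmatrix}$, then
\[
\|S\|\le\tfrac12\left(\|P\|+\|Q\|+\sqrt{(\|P\|-\|Q\|)^2+4\|X\|^2}\right).
\]
This comes from testing $S$ against a unit vector $(x,y)^t$: the quantity $|\langle S(x,y)^t,(x,y)^t\rangle|$ is dominated by $\|P\|\|x\|^2+2\|X\|\|x\|\|y\|+\|Q\|\|y\|^2$, whose maximum subject to $\|x\|^2+\|y\|^2=1$ equals the larger eigenvalue of the scalar matrix $\begin{pmatrix}\|P\| & \|X\|\\ \|X\| & \|Q\|\end{pmatrix}$, which is exactly the right-hand side. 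Applying this to $H_\theta$, using $\|\textit{Re}(e^{i\theta}A)\|\le w(A)$ and $\|\textit{Re}(e^{i\theta}D)\|\le w(D)$ from Lemma~\ref{lem-1b}, and invoking the elementary monotonicity of $(p,q,u)\mapsto p+q+\sqrt{(p-q)^2+u}$ in each variable over $p,q,u\ge0$, I obtain
\[
\|H_\theta\|\le\tfrac12\left(w(A)+w(D)+\sqrt{(w(A)-w(D))^2+4\|X_\theta\|^2}\right).
\]

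The step I expect to be the real obstacle is controlling the off-diagonal term $4\|X_\theta\|^2=\|e^{i\theta}B+e^{-i\theta}C^*\|^2$ uniformly in $\theta$ by $\|B\|^2+\|C\|^2+2w(CB)$, which is sharper than the crude $(\|B\|+\|C\|)^2$. For this I would compute
\[
\bigl(e^{i\theta}B+e^{-i\theta}C^*\bigr)^*\bigl(e^{i\theta}B+e^{-i\theta}C^*\bigr)=B^*B+CC^*+2\,\textit{Re}(e^{2i\theta}CB),
\]
so that $4\|X_\theta\|^2=\|B^*B+CC^*+2\,\textit{Re}(e^{2i\theta}CB)\|$. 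Since $B^*B+CC^*$ is positive with norm at most $\|B\|^2+\|C\|^2$, and $\|2\,\textit{Re}(e^{2i\theta}CB)\|\le 2w(CB)$ again by Lemma~\ref{lem-1b} (as $\textit{Re}(e^{2i\theta}CB)$ ranges among the operators $\textit{Re}(e^{i\phi}CB)$), the triangle inequality yields $4\|X_\theta\|^2\le\|B\|^2+\|C\|^2+2w(CB)$ for every $\theta$. Taking the supremum over $\theta$ in the bound for $\|H_\theta\|$, and using monotonicity in $\|X_\theta\|^2$ one last time, then gives the asserted inequality.
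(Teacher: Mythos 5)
Your proposal is correct, and it reaches the stated inequality by a genuinely different, self-contained route. The paper's proof is an assembly job: it quotes Abu-Omar and Kittaneh's bound $w(T)\le\frac12[w(A)+w(D)+\sqrt{(w(A)-w(D))^2+4w^2(T_0)}]$ for the off-diagonal part $T_0$, then controls $w(T_0)$ via the fourth-power inequality $w^4(T_0)\le\frac{1}{16}\|S\|^2+\frac14 w^2(CB)+\frac18 w(CBS+SCB)$ with $S=B^*B+CC^*$, and finally invokes $w(CBS+SCB)\le 2w(CB)\|S\|$ and $\|S\|\le\|B\|^2+\|C\|^2$ to recognize a perfect square. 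You instead work directly with $H_\theta=\textit{Re}(e^{i\theta}T)$, bound the Hermitian block matrix by the largest eigenvalue of the associated $2\times2$ scalar matrix, and obtain the key estimate $4\|X_\theta\|^2=\|B^*B+CC^*+2\,\textit{Re}(e^{2i\theta}CB)\|\le\|B\|^2+\|C\|^2+2w(CB)$ by a one-line computation of $(e^{i\theta}B+e^{-i\theta}C^*)^*(e^{i\theta}B+e^{-i\theta}C^*)$ together with Lemma \ref{lem-1b} applied to $CB$. All the steps check out: the block form of $H_\theta$ is right, the scalar-matrix eigenvalue bound and the monotonicity argument are standard, and since $w(T_0)=\sup_\theta\|X_\theta\|$ your uniform bound on $\|X_\theta\|$ is exactly equivalent to the paper's bound $4w^2(T_0)\le\|B\|^2+\|C\|^2+2w(CB)$. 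What your approach buys is independence from the two cited auxiliary results (the $w^4$ inequality for off-diagonal operator matrices and the commutator-type bound $w(CBS+SCB)\le 2w(CB)\|S\|$); in effect you re-derive the Abu-Omar--Kittaneh corollary and the off-diagonal estimate simultaneously from first principles, which makes the proof longer on the page but considerably more transparent about where the term $2w(CB)$ comes from.
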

\begin{proof}
Abu-Omar and Kittaneh in \cite[Cor. 2]{AK} proved that $$ w(T) \leq \frac{1}{2} \left[ w(A) + w(D) + \sqrt{(w(A)-w(D))^{2}+4w^2(T_0) } \right]. $$ where $T_0=\left(\begin{array}{cc}
O&B \\
C&O
\end{array}\right).$ \\
We proved in \cite[Th. 2.5]{BBP3} that \[w^4\left(\begin{array}{cc}
    O&B \\
    C&O
\end{array}\right)\leq \frac{1}{16}\|S\|^2+\frac{1}{4}w^2(CB)+\frac{1}{8}w(CBS+SCB),\]   where $S=|B|^2+|C^{*}|^2$. Our required bound follows from these above two bounds, using the facts that $w(CBS+SCB)\leq 2w(CB)\|S\|$, ( see \cite[Remark 2.15]{BPN}) and $\|S\|\leq \|B\|^2+\|C\|^2$.
\end{proof}
\begin{remark}
Paul and Bag in \cite[Th. 2.1, (i)]{PB} proved that
\[w(T)\leq \frac{1}{2} \Big[ w(A) + w(D) + \sqrt{(w(A)-w(D))^{2}+ (\|B\|+\|C\|)^2} \Big].\]
Clearly it is weaker than the inequality obtained in Theorem \ref{theorem-2.2}.
\end{remark}

Next we give  another upper bound for the numerical radius of $2 \times 2$ operator matrices.

\begin{theorem} \label{theorem-2.3}
Let  $T=\left(\begin{array}{cc}
A&B \\
C&D
\end{array}\right)$, where $A\in  B(\mathbb{H}_1),B\in  B(\mathbb{H}_2,\mathbb{H}_1) ,C\in  B(\mathbb{H}_1,\mathbb{H}_2),D\in  B(\mathbb{H}_2).$ Then 
$$ w(T) \leq \frac{1}{2} \left[ w(A) + w(D) + \sqrt{(w(A)-w(D))^{2}+ 4\alpha_1^2} \right], $$ where 
$$ \alpha_1 = \left[\frac{1}{8} \max\left\{\left(\|B\|^2 + \|C\|^2\right)^2 + 4 w^2\left(BC\right), \left(\|B\|^2 + \|C\|^2\right)^2 + 4 w^2\left(CB\right)\right\}\right]^{\frac{1}{4}}.$$	
\end{theorem}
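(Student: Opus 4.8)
The plan is to mimic the proof of Theorem~\ref{theorem-2.2}: reduce to the off-diagonal part and then insert a fourth-power estimate for its numerical radius. Writing $T_0 = \left(\begin{smallmatrix} O & B \\ C & O \end{smallmatrix}\right)$ as in that proof, \cite[Cor. 2]{AK} gives $w(T) \le \frac{1}{2}\big[w(A) + w(D) + \sqrt{(w(A)-w(D))^2 + 4w^2(T_0)}\big]$, so since $t \mapsto \sqrt{(w(A)-w(D))^2 + 4t^2}$ is increasing it suffices to show $w(T_0) \le \alpha_1$, i.e. $w^4(T_0) \le \alpha_1^4 = \frac{1}{8}\max\big\{(\|B\|^2 + \|C\|^2)^2 + 4w^2(BC),\, (\|B\|^2 + \|C\|^2)^2 + 4w^2(CB)\big\}$.

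For the off-diagonal estimate I would again start from \cite[Th. 2.5]{BBP3}, namely $w^4(T_0) \le \frac{1}{16}\|S\|^2 + \frac{1}{4}w^2(CB) + \frac{1}{8}w(CBS + SCB)$ with $S = |B|^2 + |C^{*}|^2$. Bounding the cross term by $w(CBS + SCB) \le 2w(CB)\|S\|$ (from \cite[Remark 2.15]{BPN}), the right-hand side equals $\frac{1}{16}\|S\|^2 + \frac14 w^2(CB) + \frac14 w(CB)\|S\|$. The new ingredient, as opposed to Theorem~\ref{theorem-2.2}, is to regroup these three terms not as the perfect square $\frac{1}{16}(\|S\| + 2w(CB))^2$ but rather to dominate them by $\frac18\|S\|^2 + \frac12 w^2(CB)$; this is legitimate precisely because the difference is $\frac{1}{16}(\|S\| - 2w(CB))^2 \ge 0$. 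Hence $w^4(T_0) \le \frac18\big(\|S\|^2 + 4w^2(CB)\big)$, and since $\|S\| = \big\||B|^2 + |C^{*}|^2\big\| \le \|B\|^2 + \|C\|^2$ this yields $w^4(T_0) \le \frac18\big((\|B\|^2+\|C\|^2)^2 + 4w^2(CB)\big)$.

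Finally I would observe that $T_0$ is unitarily equivalent, via the flip $U = \left(\begin{smallmatrix} O & I \\ I & O \end{smallmatrix}\right)$, to $\left(\begin{smallmatrix} O & C \\ B & O \end{smallmatrix}\right)$, so running the previous paragraph with the roles of $B$ and $C$ interchanged gives the companion estimate $w^4(T_0) \le \frac18\big((\|B\|^2+\|C\|^2)^2 + 4w^2(BC)\big)$. Each of these two bounds is at most $\alpha_1^4$ (indeed $\alpha_1^4$ is the larger of the two right-hand sides), so $w^4(T_0) \le \alpha_1^4$, i.e. $w(T_0) \le \alpha_1$. Substituting this into the Abu-Omar--Kittaneh inequality of the first paragraph produces the asserted bound.

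The genuinely delicate point is the regrouping in the second paragraph: the three terms coming from \cite{BBP3} and \cite{BPN} must be matched against the target constants $\frac18$ and $\frac12$, and it is the elementary identity $\big(\frac18\|S\|^2 + \frac12 w^2(CB)\big) - \big(\frac{1}{16}\|S\|^2 + \frac14 w^2(CB) + \frac14 w(CB)\|S\|\big) = \frac{1}{16}(\|S\|-2w(CB))^2$ that forces the clean quartic form of $\alpha_1$. Everything else — the two cited inequalities, the estimate $\|S\| \le \|B\|^2 + \|C\|^2$, the unitary symmetry in $B,C$, and the monotone substitution into the $2\times 2$ bound — is routine.
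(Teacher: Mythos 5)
Your argument is correct, and its first half coincides with the paper's proof: both start from the Abu-Omar--Kittaneh bound \cite[Cor. 2]{AK} and reduce everything to a fourth-power estimate for $w(T_0)$ with $T_0=\left(\begin{smallmatrix} O&B\\ C&O\end{smallmatrix}\right)$. Where you diverge is in how that estimate is obtained. The paper simply imports \cite[Th. 2.7]{BBP2}, which already reads $w^4(T_0)\leq \frac{1}{8}\max\{\|BB^*+C^*C\|^2+4w^2(BC),\ \|B^*B+CC^*\|^2+4w^2(CB)\}$, and then weakens the operator norms to $\|B\|^2+\|C\|^2$. You instead re-derive a sufficient bound from the same ingredients used in Theorem \ref{theorem-2.2} --- namely \cite[Th. 2.5]{BBP3}, the cross-term estimate $w(CBS+SCB)\leq 2w(CB)\|S\|$ from \cite[Remark 2.15]{BPN}, and the elementary regrouping $\frac{1}{16}\|S\|^2+\frac14 w^2(CB)+\frac14 w(CB)\|S\|\leq \frac18\|S\|^2+\frac12 w^2(CB)$, whose validity is exactly the nonnegativity of $\frac{1}{16}(\|S\|-2w(CB))^2$; the flip-unitary symmetry then supplies the companion bound with $BC$ in place of $CB$. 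This buys self-containedness (one fewer external citation, reusing only machinery already present in Section 2) and in fact yields the \emph{minimum} of the two bracketed expressions rather than the maximum, which is slightly sharper than what is needed; the paper's route is shorter but leans entirely on the quoted result from \cite{BBP2}. Both are valid proofs of the stated inequality.
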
	
\begin{proof}
This inequality follows from  the two inequalities proved in \cite[Cor. 2]{AK}  and  \cite[Th. 2.7]{BBP2},  respectively stated below:
 $$ w(T) \leq \frac{1}{2} \left[ w(A) + w(D) + \sqrt{(w(A)-w(D))^{2}+4w^2(T_0) } \right], T_0=\left(\begin{array}{cc}
O&B \\
C&O
\end{array}\right).$$ \\
and 
\[w^{4}\left(\begin{array}{cc}
	O&B \\
	C&O
\end{array}\right) \leq \frac{1}{8}\max \Big\{\| BB^{*}+C^{*}C\|^2+4w^2(BC), \| B^{*}B+CC^{*}\|^2  + 4w^2(CB)\Big\}.\]

\end{proof}

Next we obtain some upper bounds for the numerical radius of a bounded linear operator defined on  complex Hilbert space $\mathbb{H}$. We need  the  Aluthge transform of an operator $T$.  
For $T\in B(\mathbb{H})$, the Aluthge  transform \cite{JKP} of $T$, denoted as $\widetilde{T}$, is defined as \[\widetilde{T}=|T|^{\frac{1}{2}}U|T|^{\frac{1}{2}}\] where $|T|=(T^{*}T)^{\frac{1}{2}}$ and $U$ is the partial isometry associated with the polar decomposition of $T$ and so $T=U|T|, \ker T=\ker U.$ It follows easily from the definition of  $\widetilde{T}$ that  $\|\widetilde{T}\| \leq \|T\|$ and  $r(\widetilde{T})= r(T)$, also  $w(\widetilde{T}) \leq w(T)$ (see \cite{JKP}).

For definition and more information of Aluthge transform we refer the reader to \cite{Y} and references therein.

\begin{theorem} \label{theorem-2.4}
Let $T\in  B(\mathbb{H}).$ Then $$w^2(T) \leq \frac{1}{4}\|T\|\|T^2\|^\frac{1}{2} + \frac{1}{4}\|T^2\| + \frac{1}{2}\|T\|^2.$$
\end{theorem}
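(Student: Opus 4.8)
The plan is to start from Yamazaki's numerical radius inequality $w(T)\le \frac{1}{2}\|T\|+\frac{1}{2}w(\widetilde{T})$, proved in \cite{Y}, together with the companion estimate $w(\widetilde{T})\le \|T^2\|^{\frac{1}{2}}$ that underlies it (this is precisely the inequality which makes the Yamazaki bound sharper than Kittaneh's bound $w(T)\le\frac{1}{2}(\|T\|+\|T^2\|^{\frac{1}{2}})$). Squaring the first inequality and then feeding in the second in two slightly different ways should produce exactly the three terms on the right-hand side.

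First I would square the Yamazaki inequality to obtain
$$w^2(T)\le \frac{1}{4}\|T\|^2+\frac{1}{2}\|T\|\,w(\widetilde{T})+\frac{1}{4} w^2(\widetilde{T}).$$
For the quadratic summand I would apply $w(\widetilde{T})\le\|T^2\|^{\frac{1}{2}}$ directly, giving $\frac{1}{4}w^2(\widetilde{T})\le \frac{1}{4}\|T^2\|$. For the linear middle summand I would instead use the weakened form $w(\widetilde{T})\le\|T^2\|^{\frac{1}{2}}\le \frac{1}{2}\bigl(\|T\|+\|T^2\|^{\frac{1}{2}}\bigr)$, which is valid because $\|T^2\|^{\frac{1}{2}}\le\|T\|$; this turns $\frac{1}{2}\|T\|\,w(\widetilde{T})$ into $\frac{1}{4}\|T\|^2+\frac{1}{4}\|T\|\,\|T^2\|^{\frac{1}{2}}$. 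Adding the leading $\frac{1}{4}\|T\|^2$ then gives $\frac{1}{2}\|T\|^2+\frac{1}{4}\|T\|\,\|T^2\|^{\frac{1}{2}}+\frac{1}{4}\|T^2\|$, which is the asserted bound.

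The one genuinely non-routine ingredient is the estimate $w(\widetilde{T})\le\|T^2\|^{\frac{1}{2}}$; if one is unwilling to quote it from \cite{Y}, it must be established from the polar decomposition $T=U|T|$ and the formula $\widetilde{T}=|T|^{\frac{1}{2}}U|T|^{\frac{1}{2}}$, using that $T^2=U|T|U|T|$ so that the two outer factors $|T|^{\frac{1}{2}}$ can be absorbed against the middle block $U|T|U$ (the nilpotent case $T^2=0$, where $\widetilde{T}=0$, is a useful sanity check). The remaining subtlety is bookkeeping rather than analysis: since $w(\widetilde{T})\le\|T^2\|^{\frac{1}{2}}$ applied uniformly already yields the sharper $w^2(T)\le\frac{1}{4}(\|T\|+\|T^2\|^{\frac{1}{2}})^2$, the content of the present statement lies in deliberately relaxing the middle term so that the bound acquires the stated additive three-term shape, and the main thing to verify is that this relaxation, guaranteed by $\|T^2\|^{\frac{1}{2}}\le\|T\|$, is a legitimate overestimate.
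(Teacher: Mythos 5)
Your proof is correct, but it follows a different route from the paper's. The paper quotes \cite[Th.~2.6]{BBP1}, namely $w^2(T)\leq \frac{1}{4}w(\widetilde{T}^2)+\frac{1}{4}\|T\|\|\widetilde{T}\|+\frac{1}{4}\|T^{*}T+TT^{*}\|$, and then relaxes each term via $w(\widetilde{T}^2)\leq\|T\|^2$, $\|\widetilde{T}\|\leq\|T^2\|^{1/2}$ and $\|T^*T+TT^*\|\leq\|T^2\|+\|T\|^2$; you instead square Yamazaki's first-order bound $w(T)\leq\frac{1}{2}(\|T\|+w(\widetilde{T}))$ and distribute $w(\widetilde{T})\leq\|\widetilde{T}\|\leq\|T^2\|^{1/2}$ asymmetrically across the cross term and the square term. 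Both arguments hinge on the same estimate $\|\widetilde{T}\|\leq\|T^2\|^{1/2}$, and your cost--benefit analysis is sound: the asymmetric relaxation of the middle term is legitimate precisely because $\|T^2\|^{1/2}\leq\|T\|$, and the arithmetic checks out. Your route relies only on the classical Yamazaki inequality rather than the authors' own second-order refinement, and it has the virtue of making explicit something the paper's derivation hides, namely that the stated three-term bound is dominated by $\frac{1}{4}\bigl(\|T\|+\|T^2\|^{1/2}\bigr)^2$, i.e.\ by the square of Kittaneh's bound $w(T)\leq\frac{1}{2}(\|T\|+\|T^2\|^{1/2})$, so Theorem~\ref{theorem-2.4} is never sharper than that classical estimate. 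One small inaccuracy: $w(\widetilde{T})\leq\|T^2\|^{1/2}$ does not really ``underlie'' Yamazaki's inequality (whose proof is independent of it); it is simply the standard norm estimate for the Aluthge transform, obtained as you indicate from $\|\widetilde{T}\|^2=r(\widetilde{T}^{*}\widetilde{T})=r(U^{*}|T|U|T|)\leq\||T|T\|=\|T^2\|$, and it is the same inequality the paper itself invokes.
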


\begin{proof}
The proof follows from the inequality \cite[Th. 2.6]{BBP1} 
\[w^2(T)\leq \frac{1}{4}w(\widetilde{T}^2)+\frac{1}{4}\|T\|\|\widetilde{T}\|+\frac{1}{4}\|T^{*}T+TT^{*}\|\] 
and  following simple inequalities
$w(\widetilde{T}^2)\leq \|T\|^2$, $\|\widetilde{T}\|\leq \|T^2\|^{\frac{1}{2}}, \|T^*T+TT^*\|\leq \|T^2\|+\|T\|^2.$
\end{proof}

We next obtain the following inequality, which can be proved using  the inequality \cite[Th. 2.3]{BBP1},  $w^2(T)\leq \frac{1}{2}\|T\|\|\widetilde{T}\|+\frac{1}{4}\|T^{*}T+TT^{*}\| $ and noting that $\|\widetilde{T}\|\leq \|T^2\|^{\frac{1}{2}}$ and $\|T^*T+TT^*\|\leq \|T^2\|+\|T\|^2.$ 

\begin{theorem} \label{theorem-2.5}
Let $T\in  B(\mathbb{H}).$ Then $$ w^2(T) \leq \frac{1}{2}\|T\|\|T^2\|^\frac{1}{2} + \frac{1}{4}\|T^2\|+ \frac{1}{4}\|T\|^2.$$
\end{theorem}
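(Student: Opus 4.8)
The plan is to imitate the template of the proof of Theorem~\ref{theorem-2.4}: start from a known numerical radius bound that already separates out the Aluthge transform $\widetilde{T}$ and the anticommutator $T^{*}T+TT^{*}$, and then replace both of these by quantities built only from $\|T\|$ and $\|T^{2}\|$. The right base inequality is the one from \cite[Th. 2.3]{BBP1}, namely $w^{2}(T)\leq \frac{1}{2}\|T\|\,\|\widetilde{T}\|+\frac{1}{4}\|T^{*}T+TT^{*}\|$, which is the companion of the \cite[Th. 2.6]{BBP1} bound used for Theorem~\ref{theorem-2.4}: it drops the $\frac{1}{4}w(\widetilde{T}^{2})$ term at the cost of doubling the coefficient of $\|T\|\,\|\widetilde{T}\|$, while keeping the same $\frac{1}{4}\|T^{*}T+TT^{*}\|$ term.

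First I would record the two auxiliary estimates $\|\widetilde{T}\|\leq \|T^{2}\|^{1/2}$ and $\|T^{*}T+TT^{*}\|\leq \|T^{2}\|+\|T\|^{2}$. Both are precisely the facts already invoked in the proof of Theorem~\ref{theorem-2.4}, so they may be reused verbatim; the first follows from the factorisation $\widetilde{T}=|T|^{1/2}U|T|^{1/2}$ together with the polar decomposition $T=U|T|$, and the second is the standard anticommutator norm inequality. Substituting the first into the term $\frac{1}{2}\|T\|\,\|\widetilde{T}\|$ and the second into $\frac{1}{4}\|T^{*}T+TT^{*}\|$ gives $w^{2}(T)\leq \frac{1}{2}\|T\|\,\|T^{2}\|^{1/2}+\frac{1}{4}\bigl(\|T^{2}\|+\|T\|^{2}\bigr)$, which is exactly the claimed inequality once the last bracket is expanded.

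Since no supremum over $\theta$ and no case analysis intervene, the argument is a direct chain of substitutions and presents essentially no computational obstacle once the base inequality is chosen. The only step that genuinely uses structure is $\|\widetilde{T}\|\leq \|T^{2}\|^{1/2}$, which is where the finer properties of the Aluthge transform enter and which is what makes the bound sharper than the crude $\|\widetilde{T}\|\leq \|T\|$; as this estimate (and the anticommutator bound) are already established for Theorem~\ref{theorem-2.4}, the only real task is to recognise that pairing them with \cite[Th. 2.3]{BBP1} rather than \cite[Th. 2.6]{BBP1} produces the stated inequality. As a consistency check one sees that the resulting right-hand side even refines that of Theorem~\ref{theorem-2.4}: the two differ by $\frac{1}{4}\|T\|\bigl(\|T\|-\|T^{2}\|^{1/2}\bigr)\geq 0$, using $\|T^{2}\|^{1/2}\leq \|T\|$.
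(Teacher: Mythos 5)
Your proposal is correct and coincides with the paper's own argument: the authors likewise derive the bound from \cite[Th.\ 2.3]{BBP1}, $w^{2}(T)\leq \frac{1}{2}\|T\|\,\|\widetilde{T}\|+\frac{1}{4}\|T^{*}T+TT^{*}\|$, combined with the same two estimates $\|\widetilde{T}\|\leq \|T^{2}\|^{1/2}$ and $\|T^{*}T+TT^{*}\|\leq \|T^{2}\|+\|T\|^{2}$. Your closing observation that the resulting bound refines Theorem~\ref{theorem-2.4} by $\frac{1}{4}\|T\|\bigl(\|T\|-\|T^{2}\|^{1/2}\bigr)\geq 0$ is a nice extra check not made explicit in the paper.
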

We end this section with the following inequality whcih can be proved using the inequality  \cite[Th. 2.1]{BBP},  $ w^{4}(T)\leq \frac{1}{4}w^2(T^2)+\frac{1}{8}w(T^2P+PT^2)+\frac{1}{16}\|P\|^2,$  where $P=T^{*}T+TT^{*}$ and $w(T^2)\leq \|T^2\|$, $w(T^2P+PT^2)\leq 2w(T^2)\|P\|$ ( \cite[Remark 2.15]{BPN}), $\|P\|\leq \|T^2\|+\|T\|^2$.

\begin{theorem} \label{theorem-2.6}
Let $T\in  B(\mathbb{H}).$ Then $$ w^2(T) \leq \frac{3}{4}\|T^2\| + \frac{1}{4}\|T\|^2.$$
\end{theorem}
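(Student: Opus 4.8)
The plan is to derive Theorem \ref{theorem-2.6} directly from the starting inequality that the authors have already quoted, namely \cite[Th. 2.1]{BBP}, which states that for any $T \in B(\mathbb{H})$,
\[
w^{4}(T) \leq \frac{1}{4}w^2(T^2) + \frac{1}{8}w(T^2 P + P T^2) + \frac{1}{16}\|P\|^2,
\]
where $P = T^{*}T + T T^{*}$. The whole proof is then a chain of elementary majorizations applied to the three terms on the right-hand side, so the work is purely a matter of bounding each piece and collecting terms into a perfect square.

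First I would bound the middle term. Since $w$ is submultiplicative-in-spirit for the symmetrized product, I invoke the stated fact from \cite[Remark 2.15]{BPN} that $w(T^2 P + P T^2) \leq 2 w(T^2)\|P\|$; combined with $w(T^2) \leq \|T^2\|$ (the general inequality $w(\cdot) \leq \|\cdot\|$) and $\|P\| = \|T^{*}T + T T^{*}\| \leq \|T^2\| + \|T\|^2$ (the same bound used in Theorems \ref{theorem-2.4} and \ref{theorem-2.5}), this term is controlled. Applying the same two estimates $w(T^2) \leq \|T^2\|$ and $\|P\| \leq \|T^2\| + \|T\|^2$ to the first and third terms, I obtain
\[
w^4(T) \leq \tfrac{1}{4}\|T^2\|^2 + \tfrac{1}{4}\|T^2\|\bigl(\|T^2\| + \|T\|^2\bigr) + \tfrac{1}{16}\bigl(\|T^2\| + \|T\|^2\bigr)^2.
\]

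The key step is then to recognize the right-hand side as a perfect square. Writing $a = \|T^2\|$ and $b = \|T\|^2$ for brevity, the expression becomes $\tfrac14 a^2 + \tfrac14 a(a+b) + \tfrac{1}{16}(a+b)^2$, and I would expand and verify that this equals $\bigl(\tfrac34 a + \tfrac14 b\bigr)^2 = \tfrac{9}{16}a^2 + \tfrac{3}{8}ab + \tfrac{1}{16}b^2$. Expanding the first form gives $\tfrac14 a^2 + \tfrac14 a^2 + \tfrac14 ab + \tfrac{1}{16}a^2 + \tfrac18 ab + \tfrac{1}{16}b^2 = \tfrac{9}{16}a^2 + \tfrac{3}{8}ab + \tfrac{1}{16}b^2$, which matches exactly. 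Hence $w^4(T) \leq \bigl(\tfrac34\|T^2\| + \tfrac14\|T\|^2\bigr)^2$, and taking square roots (all quantities are nonnegative) yields the claimed $w^2(T) \leq \tfrac34\|T^2\| + \tfrac14\|T\|^2$.

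The only step requiring any care is the perfect-square identification, since the coefficients $\tfrac14, \tfrac14, \tfrac{1}{16}$ are not obviously those of a square; but the arithmetic above confirms the factorization is exact, so there is no slack lost and no genuine obstacle. Everything else is a routine substitution of the three cited bounds into the base inequality, exactly paralleling the structure of the proofs of Theorems \ref{theorem-2.4} and \ref{theorem-2.5}.
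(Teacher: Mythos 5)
Your proof is correct and follows exactly the route the paper itself indicates: substituting $w(T^2)\leq\|T^2\|$, $w(T^2P+PT^2)\leq 2w(T^2)\|P\|$, and $\|P\|\leq\|T^2\|+\|T\|^2$ into the inequality of \cite[Th. 2.1]{BBP} and recognizing the resulting bound as $\bigl(\tfrac34\|T^2\|+\tfrac14\|T\|^2\bigr)^2$. The only difference is that you carry out the perfect-square verification explicitly, which the paper leaves to the reader.
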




\section{\textbf{Estimation for zeros of a polynomial}}

Consider a monic polynomial of degree $n$, $p(z)=z^n+a_{n-1}z^{n-1}+a_{n-2}z^{n-2}+\ldots+a_1z+a_0,$ where  the coefficients $a_i \in \mathbb{C}$ for  $i=0,1,\ldots,n-1$.  Let $R$ denote radius of a disk with center at origin that contains all the zeros of the polynomial $p(z)$. If $\lambda $ is a zero of the polynomial $p(z)$,  equivalently,  
if $\lambda $ is an eigen value of the Frobenius companion matrix $C(p) $ (as described in the introduction), then $ \mid \lambda \mid \leq R.$  Our goal in this section is to obtain smaller possible values of $R.$ To do so 
we need  the following two well known results on numerical radius equality.

\begin{lemma}\cite[pp. 8-9]{GR}\label{lem-1}\\
Let $L_n=\left(\begin{array}{ccccc}
0&0&\ldots&0&0\\
1&0&\ldots&0&0\\
0&1&\ldots&0&0\\
\vdots&\vdots& &\vdots&\vdots\\
0&0&\ldots&1&0\\
\end{array}\right)$ be an ${n\times n}$ matrix. \\Then $w(L_n)=\cos \frac{\pi}{n+1}.$
\end{lemma}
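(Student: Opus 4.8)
The plan is to reduce the computation of $w(L_n)$ to a single Hermitian eigenvalue problem via Lemma~\ref{lem-1b}, exploiting a rotational symmetry of $L_n$. By Lemma~\ref{lem-1b}, $w(L_n)=\sup_{\theta\in\mathbb{R}}\|\textit{Re}(e^{i\theta}L_n)\|$, so it suffices to understand $\|\textit{Re}(e^{i\theta}L_n)\|$ as a function of $\theta$.

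First I would show this quantity is in fact independent of $\theta$. The key observation is that conjugating $L_n$ by the diagonal unitary $D_\theta=\mathrm{diag}(1,e^{-i\theta},e^{-2i\theta},\ldots,e^{-(n-1)i\theta})$ only rescales the subdiagonal entries by a common unimodular factor; a direct entrywise check gives $D_\theta^{*}L_nD_\theta=e^{i\theta}L_n$. Since $D_\theta$ is unitary and taking real parts commutes with conjugation by $D_\theta$, we obtain $\textit{Re}(e^{i\theta}L_n)=D_\theta^{*}\,\textit{Re}(L_n)\,D_\theta$, whence $\|\textit{Re}(e^{i\theta}L_n)\|=\|\textit{Re}(L_n)\|$ for every $\theta$. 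Consequently $w(L_n)=\|\textit{Re}(L_n)\|$.

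It then remains to compute the operator norm of the Hermitian matrix $\textit{Re}(L_n)=\tfrac12(L_n+L_n^{*})$, which is the real symmetric tridiagonal matrix with zero diagonal and all sub- and super-diagonal entries equal to $\tfrac12$ (equivalently, one half the adjacency matrix of the path on $n$ vertices). I would determine its spectrum by solving the second-order recurrence satisfied by the coordinates of an eigenvector, verifying that $v^{(k)}=\big(\sin\tfrac{jk\pi}{n+1}\big)_{j=1}^{n}$ is an eigenvector with eigenvalue $\cos\tfrac{k\pi}{n+1}$ for $k=1,\ldots,n$. Since $\tfrac{k\pi}{n+1}\in(0,\pi)$, the cosines attain their largest modulus at the two endpoints $k=1$ and $k=n$, both giving $\cos\tfrac{\pi}{n+1}$; as $\textit{Re}(L_n)$ is Hermitian, its norm equals this largest absolute eigenvalue, so $\|\textit{Re}(L_n)\|=\cos\tfrac{\pi}{n+1}$ and hence $w(L_n)=\cos\tfrac{\pi}{n+1}$.

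I expect the main obstacle to be the tridiagonal eigenvalue computation: one must correctly set up and solve the recurrence (or, equivalently, recognize the Chebyshev/path-graph structure) and confirm that the boundary conditions force the sine eigenvectors and the quantization $k\pi/(n+1)$. Everything else---the reduction via Lemma~\ref{lem-1b} and the diagonal-unitary symmetry---is routine once the symmetry is spotted.
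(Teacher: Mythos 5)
Your proof is correct and complete: the diagonal-unitary identity $D_\theta^{*}L_nD_\theta=e^{i\theta}L_n$ does reduce $w(L_n)$ to $\|\textit{Re}(L_n)\|$ via Lemma~\ref{lem-1b}, and the sine eigenvectors correctly give the spectrum $\{\cos\frac{k\pi}{n+1}\}_{k=1}^{n}$ of the resulting tridiagonal matrix. The paper itself supplies no proof, only the citation to Gustafson--Rao, and your argument is essentially the standard one found there (rotational invariance of $W(L_n)$ plus the path-graph eigenvalue computation), so there is nothing to flag.
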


\begin{lemma}\cite{FK}\label{lem-2}\\
Let $x_i\in \mathbb{C}$ for each $i=1,2,\ldots,n.$ Then\\ 
 $$w\left(\begin{array}{cccc}
x_1&x_2&\ldots&x_n\\
0&0&\ldots&0\\
0&&\ldots&0\\
\vdots&\vdots& &\vdots\\
0&0&\ldots&0\\
\end{array}\right)=\frac{1}{2}\left( |x_1|+ \sqrt{\sum \limits_{r=1}^{n}|x_r|^2} \right).$$
\end{lemma}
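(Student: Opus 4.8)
The plan is to compute the numerical radius directly from the definition, exploiting the fact that the displayed matrix, call it $M$, has all rows except the first equal to zero, so it is a rank-one operator. First I would take an arbitrary unit vector $\xi=(\xi_1,\dots,\xi_n)^{T}\in\mathbb{C}^n$ and compute $M\xi$, whose only possibly nonzero coordinate is the first, equal to $\sum_{j=1}^n x_j\xi_j$. Hence $\langle M\xi,\xi\rangle=\overline{\xi_1}\sum_{j=1}^n x_j\xi_j$, and $w(M)$ is the supremum of $\big|\overline{\xi_1}\sum_{j=1}^n x_j\xi_j\big|$ over all unit vectors $\xi$.

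Next, writing $t=\abs{\xi_1}\in[0,1]$, I would separate the first summand from the rest and estimate the tail by Cauchy--Schwarz: for fixed $t$, the constraint $\sum_{j\ge 2}\abs{\xi_j}^2=1-t^2$ gives $\big|\sum_{j\ge2}x_j\xi_j\big|\le b\sqrt{1-t^2}$, where $b=\big(\sum_{j\ge2}\abs{x_j}^2\big)^{1/2}$. Since the phase of $\xi_1$ and the phases of the $\xi_j$ ($j\ge2$) may be chosen freely, the term $x_1\xi_1$ (of modulus $\abs{x_1}t$) and the tail $\sum_{j\ge2}x_j\xi_j$ can be aligned to a common argument while Cauchy--Schwarz is simultaneously made an equality. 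This shows that for each fixed $t$ the quantity $\big|\overline{\xi_1}\sum_j x_j\xi_j\big|$ has supremum
\[ g(t)=\abs{x_1}\,t^2+b\,t\sqrt{1-t^2}, \]
attained by a genuine unit vector.

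Finally I would maximize $g$ over $t\in[0,1]$ by substituting $t=\sin\phi$ with $\phi\in[0,\pi/2]$; using $\sin^2\phi=\tfrac12(1-\cos2\phi)$ and $\sin\phi\cos\phi=\tfrac12\sin2\phi$, the function becomes $\tfrac12\abs{x_1}+\tfrac12\big(b\sin2\phi-\abs{x_1}\cos2\phi\big)$, whose maximum over $\phi$ is $\tfrac12\abs{x_1}+\tfrac12\sqrt{\abs{x_1}^2+b^2}$. Since $\abs{x_1}^2+b^2=\sum_{r=1}^n\abs{x_r}^2$, this yields exactly $\tfrac12\big(\abs{x_1}+\sqrt{\sum_{r=1}^n\abs{x_r}^2}\big)$, as claimed.

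The main obstacle is the second step, namely making the upper bound tight: one must verify that the three optimizations — equality in Cauchy--Schwarz for the tail, alignment of its argument with that of $x_1\xi_1$, and the free choice of $\abs{\xi_1}=t$ — can be realized together by a single admissible unit vector. The degenerate cases $x_1=0$ and $x_2=\dots=x_n=0$ should be checked, but both are consistent with the same formula (they force $t=1/\sqrt2$ and $t=1$ respectively). An alternative route avoiding the explicit optimization is to invoke Lemma \ref{lem-1b}: since $\mathrm{Re}(e^{i\theta}M)$ is Hermitian of rank at most two, its norm is the larger root of a quadratic, and taking the supremum over $\theta$ reproduces the same expression; however, the direct computation above seems the most transparent.
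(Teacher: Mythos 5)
Your argument is correct and complete. Note, however, that the paper itself offers no proof of this lemma: it is quoted verbatim from Fujii--Kubo \cite{FK}, so there is no internal argument to compare against. Your direct computation is a legitimate and self-contained substitute. The essential structural observation --- that the matrix $M$ is rank one, so $\langle M\xi,\xi\rangle=\overline{\xi_1}\sum_j x_j\xi_j$ --- is the same one that drives the original proof, but the two routes diverge from there: Fujii and Kubo obtain the upper bound in one line from Buzano's inequality, $\lvert\langle \xi,a\rangle\langle b,\xi\rangle\rvert\le\frac12\left(\|a\|\|b\|+\lvert\langle b,a\rangle\rvert\right)$ for unit $\xi$, applied with $a=(\overline{x_1},\dots,\overline{x_n})$ and $b=e_1$, whereas you carry out the constrained optimization by hand (Cauchy--Schwarz on the tail, then maximizing $g(t)=\lvert x_1\rvert t^2+bt\sqrt{1-t^2}$ via the substitution $t=\sin\phi$). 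Your version is more elementary in that it needs no named inequality, but it therefore owes the reader the attainability check, which you correctly identify as the delicate step and which does go through: the tail phases are independent of $\xi_1$, so equality in Cauchy--Schwarz and alignment of arguments can be arranged simultaneously for every fixed $t$, and the degenerate cases you list are consistent. One cosmetic caution: since $\overline{\xi_1}\,x_1\xi_1=x_1\lvert\xi_1\rvert^2$ has argument fixed at $\arg x_1$ independently of the phase of $\xi_1$, it is really only the tail that needs rotating; this does not affect the conclusion. Your suggested alternative via Lemma \ref{lem-1b} would also work but is heavier than necessary for a rank-one matrix.
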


\noindent Using  above two Lemmas  \ref{lem-1} and  \ref{lem-2}, we obtain some new bounds for zeros of the polynomial $p(z)$. First using Theorem \ref{theorem-2.2}, we prove the following theorem.

\begin{theorem} \label{theorem-3.1}
Let $ \lambda $ be a zero of $p(z)$.Then 
$$ |\lambda| \leq w(C(p))\leq \frac{1}{2}\left[|a_{n-1}| + cos\frac{\pi}{n} + \sqrt{(|a_{n-1}|-cos\frac{\pi}{n})^2 +\sum \limits_{r=0}^{n-2}|a_r|^2 + 1 + \alpha} \right]=R_1, $$ where $ \alpha = |a_{n-2}|+ \sqrt{\sum \limits_{r=0}^{n-2}|a_r|^2}. $
\end{theorem}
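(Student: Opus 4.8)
The plan is to realize $C(p)$ as a $2\times 2$ operator matrix and apply Theorem \ref{theorem-2.2} directly. First I would partition $C(p)$ by splitting off the top-left $1\times 1$ entry: set $A=(-a_{n-1})$, let $B=(-a_{n-2},-a_{n-3},\ldots,-a_0)$ be the remaining $1\times(n-1)$ top row, let $C=(1,0,\ldots,0)^{t}$ be the $(n-1)\times 1$ first column sitting below $A$, and let $D$ be the $(n-1)\times(n-1)$ lower-right block. The point of this particular cut is that $D$ is exactly the shift matrix $L_{n-1}$ of Lemma \ref{lem-1}, so $w(D)=\cos\frac{\pi}{(n-1)+1}=\cos\frac{\pi}{n}$, while $w(A)=|a_{n-1}|$ trivially. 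These two quantities already account for the leading $|a_{n-1}|$ and $\cos\frac{\pi}{n}$ terms appearing in $R_1$.

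Next I would evaluate the remaining ingredients of Theorem \ref{theorem-2.2}. The off-diagonal norms are immediate: $\|B\|=\big(\sum_{r=0}^{n-2}|a_r|^2\big)^{1/2}$ because $B$ is a single row, and $\|C\|=1$, so that $\|B\|^2+\|C\|^2=\sum_{r=0}^{n-2}|a_r|^2+1$. For the cross term $w(CB)$, I would compute the product $CB$, which is the $(n-1)\times(n-1)$ matrix whose first row is $(-a_{n-2},\ldots,-a_0)$ and all of whose other rows vanish. This is precisely the shape covered by Lemma \ref{lem-2} with $x_1=-a_{n-2},\ldots,x_{n-1}=-a_0$, giving $w(CB)=\tfrac12\big(|a_{n-2}|+\sqrt{\sum_{r=0}^{n-2}|a_r|^2}\big)$, and hence $2w(CB)=|a_{n-2}|+\sqrt{\sum_{r=0}^{n-2}|a_r|^2}=\alpha$.

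Finally I would substitute $w(A)=|a_{n-1}|$, $w(D)=\cos\frac{\pi}{n}$, $\|B\|^2+\|C\|^2=\sum_{r=0}^{n-2}|a_r|^2+1$, and $2w(CB)=\alpha$ into the inequality of Theorem \ref{theorem-2.2}; the resulting expression is exactly $R_1$. The opening inequality $|\lambda|\le w(C(p))$ requires no separate argument, being the relation $r(C(p))\le w(C(p))$ recorded in the introduction together with the fact that every zero of $p$ is an eigenvalue of $C(p)$.

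I do not anticipate a genuine obstacle here: once the right block decomposition is fixed, the whole argument is a bookkeeping exercise in reading off known numerical radii. The only place demanding a little care is recognizing $CB$ as a matrix in the exact form of Lemma \ref{lem-2}, namely a single nonzero row in the top position, so that its numerical radius can be written in closed form. A careless partition -- for instance cutting after the first column rather than the first row -- would fail to produce the clean $L_{n-1}$ block and would leave both $w(D)$ and $w(CB)$ far less transparent.
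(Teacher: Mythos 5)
Your proposal is correct and follows exactly the paper's own argument: the same block decomposition of $C(p)$ with $D=L_{n-1}$, the same use of Lemma \ref{lem-1} for $w(D)$ and Lemma \ref{lem-2} for $w(CB)$, substituted into Theorem \ref{theorem-2.2}. You merely make explicit the computations of $\|B\|$, $\|C\|$, and $w(CB)$ that the paper leaves to the reader.
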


\begin{proof}
Let  $C(p)=\left(\begin{array}{cc}
	A&B \\
	C&D
\end{array}\right)$, where $ A = (a_{n-1})_{1\times 1}$, $C=\left(\begin{array}{c}
1 \\
0\\
\vdots\\
0
\end{array}\right)_{n-1 \times 1} $,\\  $B=(-a_{n-2} -a_{n-3} \ldots -a_1 -a_0 )_{1\times n-1}$  and $D=L_{n-1}.$ Then using Lemma \ref{lem-1} and Lemma \ref{lem-2} in Theorem \ref{theorem-2.2} we get
$$w(C(p))\leq \frac{1}{2}\left[|a_{n-1}| + cos\frac{\pi}{n} + \sqrt{(|a_{n-1}|-cos\frac{\pi}{n})^2 +\sum \limits_{r=0}^{n-2}|a_r|^2 + 1 + \alpha} \right], $$  
where  $\alpha = |a_{n-2}|+ \sqrt{\sum \limits_{r=0}^{n-2}|a_r|^2}. $ This completes the proof.
\end{proof}

Next using Theorem \ref{theorem-2.3}, we prove the following theorem.

\begin{theorem} \label{theorem-3.2}
Let $\lambda$ be a zero of $p(z)$. Then 
$$|\lambda| \leq w(C(p))\leq  \frac{1}{2}\left[|a_{n-1}| + \cos\frac{\pi}{n} + \sqrt{\left(|a_{n-1}| - \cos\frac{\pi}{n}\right)^2 + 4\alpha^2}\right]=R_2,$$	where 
\begin{eqnarray*}
\alpha &=& \left[\frac{1}{8}\max\left\{\left(\beta +1\right)^2 + 4|a_{n-2}|^2, \left(\beta +1\right)^2 +  \delta^2 \right\}\right]^{\frac{1}{4}}, \\
\beta &=& \sum \limits_{r=0}^{n-2}|a_r|^2, \\
\delta &=&  |a_{n-2}|+ \sqrt{\sum \limits_{r=0}^{n-2}|a_r|^2}.
\end{eqnarray*}	
\end{theorem}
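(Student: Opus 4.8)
The plan is to apply Theorem \ref{theorem-2.3} to exactly the same $2\times 2$ block decomposition of the Frobenius companion matrix that was used in the proof of Theorem \ref{theorem-3.1}. That is, I would write $C(p)$ in block form with $A=(a_{n-1})_{1\times 1}$, $B=(-a_{n-2}\ -a_{n-3}\ \ldots\ -a_0)_{1\times(n-1)}$, $C=(1\ 0\ \ldots\ 0)^{t}_{(n-1)\times 1}$, and $D=L_{n-1}$. With this choice, Theorem \ref{theorem-2.3} reduces the whole problem to evaluating the scalar data $w(A)$, $w(D)$, $\|B\|$, $\|C\|$, and the two numerical radii $w(BC)$ and $w(CB)$ that enter the quantity $\alpha_1$.

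First I would dispose of the elementary evaluations. Since $A$ is $1\times 1$, $w(A)=|a_{n-1}|$; since $D=L_{n-1}$ is the $(n-1)\times(n-1)$ nilpotent shift, Lemma \ref{lem-1} gives $w(D)=\cos\frac{\pi}{n}$; and directly $\|B\|^2=\sum_{r=0}^{n-2}|a_r|^2=\beta$ with $\|C\|^2=1$, so $\|B\|^2+\|C\|^2=\beta+1$. Next I would compute the two numerical radii inside $\alpha_1$. The product $BC$ is $1\times 1$ and equals $(-a_{n-2})$, so $w(BC)=|a_{n-2}|$ and $4w^2(BC)=4|a_{n-2}|^2$. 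The product $CB$ is the $(n-1)\times(n-1)$ rank-one matrix whose first row is $(-a_{n-2}\ -a_{n-3}\ \ldots\ -a_0)$ and whose remaining rows vanish, which is precisely the matrix shape treated in Lemma \ref{lem-2}. Applying that lemma with $x_1=-a_{n-2},\ldots,x_{n-1}=-a_0$ gives $2w(CB)=|a_{n-2}|+\sqrt{\sum_{r=0}^{n-2}|a_r|^2}=\delta$, and hence $4w^2(CB)=\delta^2$.

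Substituting these values into the definition of $\alpha_1$ collapses it into exactly the stated $\alpha$, and inserting $w(A)=|a_{n-1}|$ and $w(D)=\cos\frac{\pi}{n}$ into the outer expression of Theorem \ref{theorem-2.3} yields $R_2$, which completes the argument. The only step that requires genuine care is the evaluation of $w(CB)$: one must recognize that the outer product $CB$ produces the single-nonzero-row pattern of Lemma \ref{lem-2}, and keep the index bookkeeping straight so that the $n-1$ entries $-a_{n-2},\ldots,-a_0$ are matched with $x_1,\ldots,x_{n-1}$ (noting also that $D$ is $(n-1)\times(n-1)$, so Lemma \ref{lem-1} produces $\cos\frac{\pi}{n}$ rather than $\cos\frac{\pi}{n+1}$). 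Every other step is a routine substitution into Theorem \ref{theorem-2.3}.
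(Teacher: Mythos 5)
Your proposal is correct and follows exactly the paper's route: the same block decomposition of $C(p)$ as in Theorem \ref{theorem-3.1} fed into Theorem \ref{theorem-2.3}, with Lemma \ref{lem-1} giving $w(L_{n-1})=\cos\frac{\pi}{n}$ and Lemma \ref{lem-2} giving $2w(CB)=\delta$. Your write-up simply makes explicit the substitutions ($BC=-a_{n-2}$, $\|B\|^2=\beta$, $\|C\|^2=1$) that the paper leaves to the reader.
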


\begin{proof}
We consider $C(p)=\left(\begin{array}{cc}
	A&B \\
	C&D
\end{array}\right)$ where $A,B,C$ and $D$ are same as in the proof of Theorem \ref{theorem-3.1}. Then using Lemma \ref{lem-1} and Lemma \ref{lem-2} in Theorem \ref{theorem-2.3} we have the desired bound.
\end{proof}

In the following example we show with a numerical example that our estimations in Theorem \ref{theorem-3.1} and Theorem \ref{theorem-3.2} are better than the existing estimations.

\begin{example}\label{rem-1}
We consider a polynomial $p(z)=z^5+4z^4+z^3+z^2+z+1.$ Then the upper bounds for the zeros of this polynomial $p(z)$ estimated by different mathematicians are as shown in the following table.
\begin{center}
  \begin{tabular}{ |c|c| } 
\hline
$R_{FK}$ & $5.1020$ \\
\hline
 $R_{K_1}$  & $4.5615$\\
 \hline
 $R_{AK}$ &  $4.8131$ \\ 
\hline 
 $R_{A}$ & $4.5943$  \\
\hline 
$R_{BBP}$ & $7.2809.$\\
\hline
\end{tabular}
\end{center}
But our bounds obtain in Theorem \ref{theorem-3.1} give $R_1=4.5365$ and Theorem \ref{theorem-3.2} give $R_2=4.5509$. Therefore for this polynomial $p(z)$, our obtain bounds in Theorem \ref{theorem-3.1} and Theorem \ref{theorem-3.2} are better than the above mentioned bounds.
\end{example}

We next obtain an estimation of radius $R$ and for that we need the following numerical radius inequality \cite[Cor. 3.6]{BPN}.

\begin{lemma} \label{lem-4}
Let  $T=\left(\begin{array}{cc}
A&B \\
C&D
\end{array}\right)$, where $A\in B(\mathbb{H}_1),B\in  B(\mathbb{H}_2,\mathbb{H}_1) ,C\in  B(\mathbb{H}_1,\mathbb{H}_2),D\in  B(\mathbb{H}_2)$. Then 
$$w(T) \leq \sqrt { w^2(A) + \frac{1}{2}\|B\|\left(w(A)+\frac{1}{2}\|B\|\right)} +  \sqrt { w^2(D) + \frac{1}{2}\|C\|\left(w(D)+\frac{1}{2}\|C\|\right)}.$$
\end{lemma}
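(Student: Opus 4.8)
The plan is to separate $T$ into its two block rows and exploit the subadditivity of the numerical radius, valid because $w(\cdot)$ is a norm on $B(\mathbb{H}_1\oplus\mathbb{H}_2)$. Writing
\[
T=\left(\begin{array}{cc} A&B\\ 0&0\end{array}\right)+\left(\begin{array}{cc} 0&0\\ C&D\end{array}\right),
\]
we obtain $w(T)\le w\left(\begin{array}{cc} A&B\\ 0&0\end{array}\right)+w\left(\begin{array}{cc} 0&0\\ C&D\end{array}\right)$. The second summand is unitarily equivalent, via the flip unitary $\left(\begin{array}{cc} 0&I\\ I&0\end{array}\right)$, to $\left(\begin{array}{cc} D&C\\ 0&0\end{array}\right)$, so its numerical radius is unchanged. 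Hence it suffices to establish the single key estimate
\[
w\left(\begin{array}{cc} A&B\\ 0&0\end{array}\right)\le\sqrt{w^2(A)+\tfrac{1}{2}\|B\|\left(w(A)+\tfrac{1}{2}\|B\|\right)}
\]
and apply it to both pieces, with $(A,B)$ replaced by $(D,C)$ in the second.

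To prove the key estimate I would take a unit vector $\binom{x}{y}\in\mathbb{H}_1\oplus\mathbb{H}_2$ and compute the quadratic form directly, getting $\langle Ax,x\rangle+\langle By,x\rangle$, whence its modulus is at most $w(A)\|x\|^2+\|B\|\|x\|\|y\|$. Setting $t=\|x\|^2\in[0,1]$, so that $\|y\|^2=1-t$, the right-hand side becomes $f(t)=w(A)\,t+\|B\|\sqrt{t(1-t)}$. I would maximize $f$ over $[0,1]$ through the substitution $t=\tfrac{1}{2}(1+\cos\theta)$, $\theta\in[0,\pi]$, which rewrites $f$ as $\tfrac{1}{2}w(A)+\tfrac{1}{2}\left(w(A)\cos\theta+\|B\|\sin\theta\right)$; the amplitude bound for $w(A)\cos\theta+\|B\|\sin\theta$ then gives $\max f=\tfrac{1}{2}\left(w(A)+\sqrt{w^2(A)+\|B\|^2}\right)$. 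Taking the supremum over unit vectors yields $w\left(\begin{array}{cc} A&B\\ 0&0\end{array}\right)\le\tfrac{1}{2}\left(w(A)+\sqrt{w^2(A)+\|B\|^2}\right)$.

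Finally I would upgrade this sharper bound to the stated form through the elementary inequality $\tfrac{1}{2}\left(a+\sqrt{a^2+b^2}\right)\le\sqrt{a^2+\tfrac{1}{2}ab+\tfrac{1}{4}b^2}$ for $a,b\ge 0$; squaring reduces it to $\tfrac{1}{2}a\left(a+b-\sqrt{a^2+b^2}\right)\ge 0$, which holds since $a+b\ge\sqrt{a^2+b^2}$. Combining the two block estimates through subadditivity then completes the proof. The only genuinely delicate point is the maximization of $f(t)$, which I expect to be the crux; the trigonometric substitution makes it routine, while the remaining steps are bookkeeping with the norm property of $w(\cdot)$ and its unitary invariance.
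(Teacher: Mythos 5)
Your proof is correct, but it takes a different route from the paper: the paper does not prove Lemma \ref{lem-4} at all, it simply imports it as \cite[Cor. 3.6]{BPN}, whereas you give a short self-contained argument. Every step of yours checks out: the row decomposition $T=\left(\begin{smallmatrix} A&B\\ 0&0\end{smallmatrix}\right)+\left(\begin{smallmatrix} 0&0\\ C&D\end{smallmatrix}\right)$ together with subadditivity of $w$; the flip-unitary conjugation sending the second summand to $\left(\begin{smallmatrix} D&C\\ 0&0\end{smallmatrix}\right)$; the quadratic-form estimate $|\langle Ax,x\rangle+\langle By,x\rangle|\le w(A)\|x\|^2+\|B\|\|x\|\|y\|$ and its maximization giving $w\left(\begin{smallmatrix} A&B\\ 0&0\end{smallmatrix}\right)\le\tfrac{1}{2}\bigl(w(A)+\sqrt{w^2(A)+\|B\|^2}\bigr)$; and the elementary comparison $\tfrac{1}{2}\bigl(a+\sqrt{a^2+b^2}\bigr)\le\sqrt{a^2+\tfrac{1}{2}b(a+\tfrac{1}{2}b)}$, whose verification via $a+b\ge\sqrt{a^2+b^2}$ is exactly right. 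Two remarks on what your route buys. First, your intermediate bound, summed over the two rows, is precisely the inequality of Lemma \ref{lem-6} (Shebrawi's bound, \cite[Cor. 3.4]{KS}); so you have in effect shown that Lemma \ref{lem-4} is a formal consequence of Lemma \ref{lem-6} and is never sharper than it --- an observation the paper does not make, and one that slightly undercuts the independent interest of Theorem \ref{theorem-3.3} relative to Theorem \ref{theorem-3.5}. Second, your argument is entirely elementary (no Aluthge transforms or power inequalities from \cite{BPN} are needed), which makes the lemma accessible without the external reference.
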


\begin{theorem} \label{theorem-3.3}
Let $\lambda$ be a zero of $p(z).$ Then
$$ |\lambda| \leq w(C(p))\leq  \sqrt{|a_{n-1}|^2 + \frac{1}{2}\alpha\left(|a_{n-1}| + \frac{1}{2} \alpha\right)} + \sqrt{\cos^2\frac{\pi}{n} + \frac{1}{2}\left(\cos\frac{\pi}{n} + \frac{1}{2}\right)}=R_3,$$ 
where  $\alpha = \sqrt{\sum \limits_{r=0}^{n-2}|a_r|^2}.$
\end{theorem}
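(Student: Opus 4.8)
The plan is to apply Lemma \ref{lem-4} to the Frobenius companion matrix $C(p)$, using exactly the same block decomposition already set up in the proof of Theorem \ref{theorem-3.1}. That is, I would write $C(p)=\left(\begin{smallmatrix} A & B \\ C & D \end{smallmatrix}\right)$ with $A=(a_{n-1})_{1\times 1}$, $B=(-a_{n-2}\ -a_{n-3}\ \ldots\ -a_0)_{1\times(n-1)}$, $C=(1\ 0\ \ldots\ 0)^{t}_{(n-1)\times 1}$, and $D=L_{n-1}$. With this choice the bound in Lemma \ref{lem-4} becomes an entirely computable expression in the four quantities $w(A)$, $w(D)$, $\|B\|$ and $\|C\|$, so the whole proof reduces to evaluating these four numbers.

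The key steps, in order, are as follows. First, $w(A)=|a_{n-1}|$ since $A$ is a scalar. Second, $w(D)=w(L_{n-1})=\cos\frac{\pi}{n}$ by Lemma \ref{lem-1} (note the index: $L_{n-1}$ is $(n-1)\times(n-1)$, so the formula gives $\cos\frac{\pi}{(n-1)+1}=\cos\frac{\pi}{n}$). Third, $\|B\|=\sqrt{\sum_{r=0}^{n-2}|a_r|^2}=\alpha$, because $B$ is a single row vector and its operator norm is its Euclidean norm. Fourth, $\|C\|=1$, since $C$ is a single column with one nonzero entry equal to $1$. Substituting $w(A)=|a_{n-1}|$, $\|B\|=\alpha$ into the first square root of Lemma \ref{lem-4} yields $\sqrt{|a_{n-1}|^2+\frac12\alpha(|a_{n-1}|+\frac12\alpha)}$, and substituting $w(D)=\cos\frac{\pi}{n}$, $\|C\|=1$ into the second yields $\sqrt{\cos^2\frac{\pi}{n}+\frac12(\cos\frac{\pi}{n}+\frac12)}$, which is precisely $R_3$. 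The leading inequality $|\lambda|\le w(C(p))$ is the standard fact $r(T)\le w(T)$ recorded in the introduction.

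I do not anticipate any serious obstacle, since Lemma \ref{lem-4} does all the operator-theoretic work and the remaining task is purely the identification of the four scalar quantities. The only point demanding a little care is the index bookkeeping in the two Lemmas: one must keep track that the companion matrix is $n\times n$ while the block $D=L_{n-1}$ is $(n-1)\times(n-1)$, so that Lemma \ref{lem-1} produces $\cos\frac{\pi}{n}$ rather than $\cos\frac{\pi}{n+1}$. (Unlike the proofs of Theorems \ref{theorem-3.1} and \ref{theorem-3.2}, Lemma \ref{lem-2} is not actually needed here, because Lemma \ref{lem-4} only requires the norms $\|B\|,\|C\|$ and not the numerical radius of the off-diagonal block.) Hence the proof is a one-line substitution into Lemma \ref{lem-4}.
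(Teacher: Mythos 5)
Your proposal is correct and is exactly the paper's argument: the paper proves Theorem \ref{theorem-3.3} by applying Lemma \ref{lem-4} to the same block decomposition of $C(p)$ used in Theorem \ref{theorem-3.1}, together with Lemma \ref{lem-1} for $w(L_{n-1})=\cos\frac{\pi}{n}$. Your explicit identification of $w(A)$, $w(D)$, $\|B\|$, $\|C\|$ (and your observation that Lemma \ref{lem-2} is not needed here) simply fills in the details the paper leaves to the reader.
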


\begin{proof}
The proof follows from  using Lemma \ref{lem-4}, Lemma \ref{lem-1} and  similar argument as in the proof of Theorem \ref{theorem-3.1}.  
\end{proof}

The next example highlights that the above estimation is better than the existing ones.

\begin{example}\label{rem-2}
We consider a polynomial $p(z)=z^5+z^3+z+2.$ Then the upper bounds for the zeros of this polynomial $p(z)$ estimated by different mathematicians are as shown in the following table.
\begin{center}
 \begin{tabular}{ |c|c| } 
\hline
$R_{C}$ & $3$ \\
\hline
 $R_{M}$  & $4$\\
 \hline
 $R_{CM}$ &  $2.6457$ \\ 
\hline 
\end{tabular}
\end{center}
But our bound in Theorem \ref{theorem-3.3} gives $R_3=2.3688.$ Therefore for this polynomial $p(z)$, the estimation for zeros of polynomial in Theorem \ref{theorem-3.3} is better than  the existing estimations mentioned above.
\end{example}

We need the following Lemma  \cite[Cor. 3.7]{BPN} to prove the next theorem.

\begin{lemma} \label{lem-5}
Let  $T=\left(\begin{array}{cc}
A&B \\
C&D
\end{array}\right)$, where $A\in B(\mathbb{H}_1),B\in  B(\mathbb{H}_2,\mathbb{H}_1) ,C\in  B(\mathbb{H}_1,\mathbb{H}_2),D\in  B(\mathbb{H}_2)$.Then 
\[w(T) \leq \sqrt{2w^2(A) + \frac{1}{2}\left(\|A^*B\| + \|B\|^2\right)} + \sqrt{2w^2(D) + \frac{1}{2}\left(\|D^*C\| + \|C\|^2\right)}.\]
\end{lemma}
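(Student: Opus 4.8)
The plan is to reduce the inequality to the subadditivity of the numerical radius together with a single sharp estimate for a ``row'' block operator. First I would split
\[ T=\begin{pmatrix} A & B \\ 0 & 0\end{pmatrix}+\begin{pmatrix} 0 & 0 \\ C & D\end{pmatrix}, \]
and use $w(X+Y)\le w(X)+w(Y)$ to get $w(T)\le w(R_1)+w(R_2)$, where $R_1,R_2$ are the two summands. A unitary interchange of the two coordinate spaces shows $w(R_2)=w\begin{pmatrix} D & C \\ 0 & 0\end{pmatrix}$, so it suffices to prove, for every row operator $R=\begin{pmatrix} A & B \\ 0 & 0\end{pmatrix}$, the estimate $w^2(R)\le 2w^2(A)+\tfrac12(\|A^*B\|+\|B\|^2)$. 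Applying this to $R_1$ and to the flipped $R_2$ (with $A,B$ replaced by $D,C$) and adding the two square roots yields exactly the claimed bound.

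The core is therefore the row estimate, which I would obtain from Lemma \ref{lem-1b}. Writing $H_\theta=\textit{Re}(e^{i\theta}A)$, a direct computation gives
\[ \textit{Re}(e^{i\theta}R)=\begin{pmatrix} H_\theta & \tfrac12 e^{i\theta}B \\ \tfrac12 e^{-i\theta}B^* & 0\end{pmatrix}, \]
a self-adjoint block operator. Evaluating its quadratic form on a unit vector $(u,v)$ with $\|u\|=\cos\phi$, $\|v\|=\sin\phi$, using $|\langle H_\theta u,u\rangle|\le\|H_\theta\|\cos^2\phi$ and $|\langle Bv,u\rangle|\le\|B\|\cos\phi\sin\phi$, and maximizing over $\phi$, one gets $\|\textit{Re}(e^{i\theta}R)\|\le \tfrac12\|H_\theta\|+\tfrac12\sqrt{\|H_\theta\|^2+\|B\|^2}$. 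Since $\sup_\theta\|H_\theta\|=w(A)$ and the right-hand side is increasing in $\|H_\theta\|$, Lemma \ref{lem-1b} yields $w(R)\le \tfrac12 w(A)+\tfrac12\sqrt{w^2(A)+\|B\|^2}$.

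It then remains to square and simplify. Setting $a=w(A)$, $b=\|B\|$, the elementary estimates $\sqrt{a^2+b^2}\le a+b$ and $a^2-\tfrac12 ab+\tfrac14 b^2\ge0$ give
\[ \Big(\tfrac12 a+\tfrac12\sqrt{a^2+b^2}\Big)^2\le 2a^2+\tfrac12 b^2\le 2w^2(A)+\tfrac12\big(\|A^*B\|+\|B\|^2\big), \]
the last step only because $\|A^*B\|\ge0$; this is the desired row estimate. In this route the term $\|A^*B\|$ is thus mere slack, but one may instead retain it by estimating $\|\textit{Re}(e^{i\theta}R)\|^2=\|(\textit{Re}(e^{i\theta}R))^2\|$ and controlling the off-diagonal block $\tfrac12 e^{i\theta}H_\theta B$. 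I expect the main obstacle to be the row estimate itself: carrying out the quadratic-form maximization so as to retain $w(A)$ (rather than the cruder $\|A\|$) and then matching the precise constants in the stated form; the initial splitting and the unitary reduction of $R_2$ to row form are routine.
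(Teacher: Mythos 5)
Your proposal is correct, but it necessarily takes a different route from the paper, because the paper does not prove Lemma \ref{lem-5} at all: it is quoted verbatim as \cite[Cor. 3.7]{BPN} and used as a black box. Your argument is self-contained and each step checks out: the splitting $T=R_1+R_2$ and subadditivity of $w$, the unitary swap giving $w(R_2)=w\left(\begin{smallmatrix} D & C \\ 0 & 0\end{smallmatrix}\right)$, the computation $\textit{Re}(e^{i\theta}R)=\left(\begin{smallmatrix} H_\theta & \frac12 e^{i\theta}B \\ \frac12 e^{-i\theta}B^* & 0\end{smallmatrix}\right)$ with the quadratic-form maximization yielding $\|\textit{Re}(e^{i\theta}R)\|\le\frac12\|H_\theta\|+\frac12\sqrt{\|H_\theta\|^2+\|B\|^2}$, the monotonicity in $\|H_\theta\|$ combined with Lemma \ref{lem-1b}, and the elementary squaring step (the discriminant of $a^2-\frac12 ab+\frac14 b^2$ in $a$ is $-\frac34 b^2\le 0$, so that quadratic is indeed nonnegative). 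Two remarks. First, your route actually proves the strictly sharper inequality $w(T)\le\sqrt{2w^2(A)+\frac12\|B\|^2}+\sqrt{2w^2(D)+\frac12\|C\|^2}$, with $\|A^*B\|$ and $\|D^*C\|$ entering only as discarded slack; this is worth stating explicitly, since it shows the cited form of the lemma is not optimal from this point of view. Second, the cited source presumably obtains the $\|A^*B\|$ term organically from an inequality of the type $w^2(S)\le \alpha\, w(S^2)+\beta\|S^*S+SS^*\|$ applied to the row blocks (note $R_1^*R_1$ contains $A^*B$ in its off-diagonal corner), so your intermediate bound $w(R)\le\frac12 w(A)+\frac12\sqrt{w^2(A)+\|B\|^2}$ is a genuinely different, and here more efficient, estimate for the same row operator. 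No gap; the proof stands.
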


\begin{theorem} \label{theorem-3.4}
Let $\lambda$ be a zero of $p(z)$.Then
\[|\lambda| \leq w(C(p))\leq  \sqrt{2|a_{n-1}|^2 + \frac{1}{2}\left(\alpha + \beta^2\right)} + \sqrt{2\cos^2\frac{\pi}{n} +\frac{1}{2}}=R_4,\]
where $\alpha=\sqrt{\sum \limits_{r=0}^{n-2}|a_ra_{n-1}|^2}$ and $\beta = \sqrt{\sum \limits_{r=0}^{n-2}|a_r|^2}.$ 
\end{theorem}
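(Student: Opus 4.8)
The plan is to apply Lemma \ref{lem-5} to the Frobenius companion matrix $C(p)$ using exactly the same $2\times 2$ block partition employed in the proof of Theorem \ref{theorem-3.1}. That is, I would take $A=(-a_{n-1})_{1\times 1}$, $B=(-a_{n-2}\ -a_{n-3}\ \cdots\ -a_0)_{1\times(n-1)}$, $C=(1\ 0\ \cdots\ 0)^{t}_{(n-1)\times 1}$, and $D=L_{n-1}$. The first inequality $|\lambda|\le w(C(p))$ is immediate, since every zero $\lambda$ of $p$ is an eigenvalue of $C(p)$ and $r(C(p))\le w(C(p))$.

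The second inequality then reduces to computing the five scalar quantities feeding into Lemma \ref{lem-5}. Because $A$ is a scalar block and $B$ a row block, $w(A)=|a_{n-1}|$ and $\|B\|=\sqrt{\sum_{r=0}^{n-2}|a_r|^2}=\beta$, while $\|C\|=1$; and Lemma \ref{lem-1} gives $w(D)=w(L_{n-1})=\cos\frac{\pi}{n}$. For the remaining cross terms, observe $A^{*}B=-\overline{a_{n-1}}\,B$, so that $\|A^{*}B\|=|a_{n-1}|\,\|B\|=\sqrt{\sum_{r=0}^{n-2}|a_r a_{n-1}|^2}=\alpha$, matching the $\alpha$ in the statement.

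The only step requiring a genuine (if small) observation rather than a mechanical norm computation is $\|D^{*}C\|$. Here $D=L_{n-1}$ is the lower shift, so its adjoint $D^{*}=L_{n-1}^{*}$ is the upper shift, whose first column is the zero vector; hence $D^{*}C=L_{n-1}^{*}e_1=0$ and $\|D^{*}C\|=0$. This is where the specific structure of the companion matrix, namely the single $1$ sitting in the top-left of the shift block $C$, does the work, and it is the only point where I would be careful. I do not anticipate any real obstacle: the argument is a direct specialization of Lemma \ref{lem-5}, with the only bookkeeping being the value of $\|A^{*}B\|$ and the vanishing of $\|D^{*}C\|$.

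Substituting these values into Lemma \ref{lem-5} then yields
\[w(C(p))\le \sqrt{2|a_{n-1}|^2+\tfrac{1}{2}\left(\alpha+\beta^2\right)}+\sqrt{2\cos^2\tfrac{\pi}{n}+\tfrac{1}{2}},\]
which is precisely $R_4$, completing the proof.
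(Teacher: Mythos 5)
Your proposal is correct and is exactly the paper's argument: the paper likewise obtains Theorem \ref{theorem-3.4} by applying Lemma \ref{lem-5} to $C(p)$ with the same block partition as in Theorem \ref{theorem-3.1} and invoking Lemma \ref{lem-1} for $w(L_{n-1})$. Your explicit computations of $\|A^{*}B\|=\alpha$ and $\|D^{*}C\|=0$ are the details the paper leaves implicit, and they are right.
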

 
\begin{proof}
The proof follows from Lemma \ref{lem-5}, by using Lemma \ref{lem-1} and the similar argument as in the proof of Theorem \ref{theorem-3.1}.  
\end{proof}
As before we provide an example.
\begin{example}\label{rem-3}
We consider a polynomial $p(z)=z^5+z^3+z+5.$ Then the upper bounds for the zeros of this polynomial $p(z)$ estimated by different mathematicians are as shown in the following table.
\begin{center}
  \begin{tabular}{ |c|c| } 
\hline
$R_{C}$ & $6$ \\
\hline
 $R_{M}$  & $7$\\
 \hline
 $R_{CM}$ &  $5.2915$ \\ 
\hline 
\end{tabular}
\end{center}
But our bound in Theorem \ref{theorem-3.4} gives $R_4=5.0192.$ Therefore for this polynomial $p(z)$, the estimation for zeros of polynomial in Theorem \ref{theorem-3.4} is better than the existing estimations mentioned in this example.
\end{example}

We state an upper bound for numerical radius of $2 \times 2$ operator matrices \cite[Cor. 3.4]{KS} and using it we prove our next theorem.

\begin{lemma} \label{lem-6}
Let  $T=\left(\begin{array}{cc}
A&B \\
C&D
\end{array}\right)$, where $A\in  B(\mathbb{H}_1),B\in  B(\mathbb{H}_2,\mathbb{H}_1) ,C\in  B(\mathbb{H}_1,\mathbb{H}_2),D\in  B(\mathbb{H}_2)$ Then
\[w\left(\begin{array}{cc}
A&B \\
C&D
\end{array}\right) \leq \frac{1}{2}\left[w(A) + w(D) + \sqrt{w^2(A) + \|B\|^2} + \sqrt{w^2(D) + \|C\|^2}\right].\]
\end{lemma}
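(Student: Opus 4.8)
The plan is to split $T$ into its block-upper-triangular and block-lower-triangular parts and exploit the fact, recalled in the introduction, that $w(\cdot)$ is a norm on $B(\mathbb{H})$. I would write
$$T = \begin{pmatrix} A & B \\ 0 & 0 \end{pmatrix} + \begin{pmatrix} 0 & 0 \\ C & D \end{pmatrix},$$
and call the two summands $X$ and $Y$. Subadditivity of the numerical radius then gives $w(T) \le w(X) + w(Y)$, so it suffices to establish the two one-sided estimates $w(X) \le \tfrac12\bigl[w(A) + \sqrt{w^2(A) + \|B\|^2}\bigr]$ and, symmetrically, $w(Y) \le \tfrac12\bigl[w(D) + \sqrt{w^2(D) + \|C\|^2}\bigr]$; adding them yields exactly the claimed bound.

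For the first estimate I would work directly with the numerical range. Fix a unit vector $v = \begin{pmatrix} x \\ y \end{pmatrix} \in \mathbb{H}_1 \oplus \mathbb{H}_2$, so that $\|x\|^2 + \|y\|^2 = 1$. Since $Xv = \begin{pmatrix} Ax + By \\ 0 \end{pmatrix}$, we have $\langle Xv, v\rangle = \langle Ax, x\rangle + \langle By, x\rangle$, and hence by the definition of $w(A)$ together with the Cauchy--Schwarz inequality,
$$|\langle Xv, v\rangle| \le |\langle Ax, x\rangle| + |\langle By, x\rangle| \le w(A)\,\|x\|^2 + \|B\|\,\|x\|\,\|y\|.$$
The key structural point is that the block-triangular form of $X$ makes the two inner products decouple, so only $\|B\|$ (and no finer information about $B$) survives.

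Writing $\|x\| = \cos t$ and $\|y\| = \sin t$ with $t \in [0, \tfrac{\pi}{2}]$ reduces the right-hand side to $w(A)\cos^2 t + \|B\|\cos t\,\sin t = \tfrac{w(A)}{2} + \tfrac12\bigl[w(A)\cos 2t + \|B\|\sin 2t\bigr]$, whose maximum over $t$ is $\tfrac12\bigl[w(A) + \sqrt{w^2(A) + \|B\|^2}\bigr]$. Taking the supremum over all unit vectors $v$ then gives the bound for $w(X)$. The estimate for $w(Y)$ follows by the identical computation with the roles of the blocks interchanged: now $\langle Yv, v\rangle = \langle Cx, y\rangle + \langle Dy, y\rangle$, which is controlled by $w(D)\|y\|^2 + \|C\|\,\|x\|\,\|y\|$ and, after the same trigonometric maximization, by $\tfrac12\bigl[w(D) + \sqrt{w^2(D) + \|C\|^2}\bigr]$.

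The argument is essentially elementary, so I do not anticipate a deep obstacle; the one place requiring care is the order of operations in the two suprema. I would first bound $|\langle Xv, v\rangle|$ pointwise by the scalar expression above and only afterward maximize the resulting single-variable trigonometric function, since maximizing the upper bound over $t$ dominates $\sup_v |\langle Xv, v\rangle| = w(X)$. It is worth emphasizing that passing from the full matrix $T$ to the two triangular pieces is exactly what removes the troublesome cross term: for $T$ itself the diagonal form $\langle Tv, v\rangle$ carries both $\langle By, x\rangle$ and $\langle Cx, y\rangle$ simultaneously, whereas splitting into $X$ and $Y$ lets each off-diagonal block be absorbed into its own norm without interference, which is what makes the clean additive estimate possible.
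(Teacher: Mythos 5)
Your argument is correct and complete. Note that the paper does not actually prove this lemma: it is imported verbatim from \cite[Cor. 3.4]{KS}, so there is no in-paper proof to compare against, and your derivation serves as a genuine self-contained substitute for the citation. Both ingredients you use are sound. Subadditivity $w(T)\leq w(X)+w(Y)$ is legitimate because $w$ is a norm (as recalled in the paper's introduction). For the block-row piece, the pointwise bound $|\langle Xv,v\rangle|\leq w(A)\|x\|^{2}+\|B\|\,\|x\|\,\|y\|$ is justified since $|\langle Ax,x\rangle|\leq w(A)\|x\|^{2}$ by homogeneity of the numerical radius and $|\langle By,x\rangle|\leq\|B\|\,\|y\|\,\|x\|$ by Cauchy--Schwarz; the trigonometric maximization then gives
\[
w(A)\cos^{2}t+\|B\|\cos t\sin t=\tfrac{1}{2}w(A)+\tfrac{1}{2}\left(w(A)\cos 2t+\|B\|\sin 2t\right)\leq\tfrac{1}{2}\left[w(A)+\sqrt{w^{2}(A)+\|B\|^{2}}\right],
\]
where the last step is again Cauchy--Schwarz (so you do not even need the maximum to be attained). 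The column piece is handled symmetrically, and adding the two one-sided estimates reproduces the stated bound exactly. The only point worth flagging is that this decomposition of $T$ into block-triangular summands and the separate estimation of each piece is essentially the standard route to results of this shape in the operator-matrix literature, so while your proof is independent of the paper, it is likely close in spirit to the proof in the cited source rather than a structurally new argument.
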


\begin{theorem} \label{theorem-3.5}
Let $\lambda$ be a zero of $p(z).$ Then
\[|\lambda| \leq w(C(p))\leq  \frac{1}{2} \left[|a_{n-1}| + \cos\frac{\pi}{n} + \sqrt{|a_{n-1}|^2 + \alpha} + \sqrt{\cos^2\frac{\pi}{n} +1}\right]=R_5,\]
 where $\alpha =\sqrt{ \sum \limits_{r=0}^{n-2}|a_r|^2}. $
\end{theorem}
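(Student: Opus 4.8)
The plan is to realize the Frobenius companion matrix as a $2\times 2$ operator matrix in exactly the same block decomposition used in the proofs of Theorems \ref{theorem-3.1}--\ref{theorem-3.4}, and then to feed the blocks into Lemma \ref{lem-6}. Concretely, I would write $C(p)=\left(\begin{smallmatrix} A & B \\ C & D \end{smallmatrix}\right)$ with $A=(a_{n-1})_{1\times 1}$ (up to sign, $A=(-a_{n-1})$, but only $|a_{n-1}|$ enters through $w(A)$), with $B=(-a_{n-2}\ \cdots\ -a_0)_{1\times(n-1)}$, with $C=(1\ 0\ \cdots\ 0)^{t}_{(n-1)\times 1}$, and with $D=L_{n-1}$ the $(n-1)\times(n-1)$ nilpotent lower shift. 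This is the identical partition already fixed in the proof of Theorem \ref{theorem-3.1}, so I can simply invoke ``$A,B,C,D$ same as in the proof of Theorem \ref{theorem-3.1}.''

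The next step is to evaluate each of the four quantities appearing in Lemma \ref{lem-6}. Since $A$ is $1\times 1$, its numerical radius is just $w(A)=|a_{n-1}|$ and $w^2(A)=|a_{n-1}|^2$. For $D=L_{n-1}$, Lemma \ref{lem-1} gives $w(D)=\cos\frac{\pi}{n}$ and hence $w^2(D)=\cos^2\frac{\pi}{n}$. For the off-diagonal blocks I would compute the operator norms directly: $\|B\|=\bigl(\sum_{r=0}^{n-2}|a_r|^2\bigr)^{1/2}$, so $\|B\|^2=\sum_{r=0}^{n-2}|a_r|^2=\alpha^2$, matching the $\alpha$ in the statement; and $\|C\|=1$ since $C$ is a single unit column, so $\|C\|^2=1$. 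Substituting these four evaluations into the inequality of Lemma \ref{lem-6} yields
\[
w(C(p))\le \tfrac{1}{2}\Bigl[\,|a_{n-1}|+\cos\tfrac{\pi}{n}+\sqrt{|a_{n-1}|^2+\alpha}+\sqrt{\cos^2\tfrac{\pi}{n}+1}\,\Bigr],
\]
which is exactly $R_5$ once one recalls that $\alpha=\|B\|^2=\sqrt{\sum_{r=0}^{n-2}|a_r|^2}\cdot\sqrt{\sum_{r=0}^{n-2}|a_r|^2}$; here $\alpha$ as defined in the statement equals $\|B\|$ and the term $\|B\|^2$ under the radical reads $\alpha^2$ when $\alpha=\|B\|$, so I would double-check the normalization of $\alpha$ against the displayed formula. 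The chain $|\lambda|\le r(C(p))\le w(C(p))$ from the introduction supplies the leftmost inequality, since every zero $\lambda$ of $p$ is an eigenvalue of $C(p)$.

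The only genuine obstacle is bookkeeping rather than mathematics: I must make sure the definition of $\alpha$ in the theorem statement is consistent with whether $\|B\|$ or $\|B\|^2$ is the object being substituted, because the statement writes $\alpha=\sqrt{\sum_{r=0}^{n-2}|a_r|^2}$ yet the radical contains ``$|a_{n-1}|^2+\alpha$'' (not $+\alpha^2$). Reconciling this means I should substitute $\|B\|^2=\sum_{r=0}^{n-2}|a_r|^2$ and then observe that with the stated $\alpha=\sqrt{\sum_{r=0}^{n-2}|a_r|^2}$ we have $\|B\|^2=\alpha^2$, so strictly the radical should display $|a_{n-1}|^2+\alpha^2$; I would therefore either flag this as a typographical normalization of $\alpha$ or adopt $\alpha=\sum_{r=0}^{n-2}|a_r|^2$ so the formula matches verbatim. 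Apart from this verification, the proof reduces to a one-line citation, and I would write it as: ``The proof follows from Lemma \ref{lem-6}, using Lemma \ref{lem-1} and the same block decomposition of $C(p)$ as in the proof of Theorem \ref{theorem-3.1}.''
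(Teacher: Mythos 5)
Your proposal is correct and is exactly the paper's proof: the same block decomposition $A=(-a_{n-1})$, $B=(-a_{n-2}\,\cdots\,-a_0)$, $C=(1\ 0\ \cdots\ 0)^t$, $D=L_{n-1}$ fed into Lemma \ref{lem-6}, with $w(D)=\cos\frac{\pi}{n}$ from Lemma \ref{lem-1}. Moreover, the normalization issue you flag is a genuine defect in the statement, not in your argument: Lemma \ref{lem-6} puts $\|B\|^2=\sum_{r=0}^{n-2}|a_r|^2$ under the radical, so with the paper's $\alpha=\bigl(\sum_{r=0}^{n-2}|a_r|^2\bigr)^{1/2}$ the term should read $\sqrt{|a_{n-1}|^2+\alpha^2}$ (equivalently, $\alpha$ should be defined without the square root). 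The formula as literally printed is not merely a typo one can ignore: for $p(z)=z^2+M$ with $M$ large it gives roughly $\frac{1}{2}(\sqrt{M}+1)$, which is smaller than the root modulus $\sqrt{M}$, so the stated bound is false; and the paper's own value $R_5=1.8301$ in the accompanying example is computed from the uncorrected formula (the correct Lemma \ref{lem-6} value there is about $2.27$). Your corrected substitution $\|B\|^2=\alpha^2$ is the right way to state and prove the theorem.
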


\begin{proof}
The proof follows from Lemma \ref{lem-6}, by using  Lemma \ref{lem-1} and the similar argument as in the proof of Theorem \ref{theorem-3.1}.  
\end{proof}
Again we give an example to show that the estimation is better than the existing ones.
\begin{example}
We consider the same polynomial $p(z)$ in Remark \ref{rem-2}, i.e.,  $p(z)=z^5+z^3+z+2.$ Then the upper bounds for the zeros of this polynomial $p(z)$ estimated by different mathematicians are as shown in the following table.
\begin{center}
  \begin{tabular}{ |c|c| } 
\hline
$R_{C}$ & $3$ \\
\hline
 $R_{M}$  & $4$\\
 \hline
 $R_{CM}$ &  $2.6457$\\ 
\hline 
 $R_{FK}$ & $2.0907$ \\
\hline 
$R_{A}$ & $2.1760$\\
\hline
$R_{K_1}$ & $2.1430$\\
\hline
$R_{K_2}$ & $1.9580$\\
\hline
$R_{AK}$ & $2.1678$\\
\hline
\end{tabular}
\end{center}
But our bound in Theorem \ref{theorem-3.5} gives $R_5=1.8301.$ Therefore for this polynomial $p(z)$, the estimation in Theorem \ref{theorem-3.5} is better than all the existing estimations mentioned in this example.
\end{example}

Next we give the following two lemmas which can be found in \cite[pp. 335-336]{K1} and \cite[Th. 2.1]{KS} respectively.

\begin{lemma} \label{lem-8}
$$\|C(p)\| = \left[\frac{1}{2}\left(1 + \sum \limits_{r=0}^{n-1}|a_r|^2 + \sqrt{\left(1+ \sum \limits_{r=0}^{n-1}|a_r|^2 \right)^2 -4|a_0|^2}\right) \right]^\frac{1}{2}.$$
\end{lemma}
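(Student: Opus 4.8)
The plan is to use the identity $\|C(p)\|^{2}=\lambda_{\max}\big(C(p)C(p)^{*}\big)$ and to compute $C(p)C(p)^{*}$ explicitly. Reading off the rows of $C(p)$ as vectors, the first row is $(-a_{n-1},-a_{n-2},\ldots,-a_{0})$ while the $i$-th row for $i\ge 2$ is the standard basis vector $e_{i-1}$. Taking inner products of pairs of rows, I expect to obtain the bordered-identity form
$$C(p)C(p)^{*}=\left(\begin{array}{cc} s & b^{*} \\ b & I_{n-1}\end{array}\right),$$
where $s=\sum_{r=0}^{n-1}|a_{r}|^{2}$ and $b=(-\overline{a_{n-1}},-\overline{a_{n-2}},\ldots,-\overline{a_{1}})^{T}\in\mathbb{C}^{n-1}$, so that $\|b\|^{2}=s-|a_{0}|^{2}$.

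The key step is then to diagonalize this matrix. First I would note that every vector $(0,y)$ with $y\perp b$ is an eigenvector for the eigenvalue $1$, giving an $(n-2)$-dimensional eigenspace. For the remaining eigenvalues $\mu\neq 1$, writing an eigenvector as $(x,y)$ and using the block equations $sx+b^{*}y=\mu x$ and $bx+y=\mu y$, I would solve the second for $y=\frac{x}{\mu-1}b$ and substitute into the first to reach the secular equation $s+\frac{\|b\|^{2}}{\mu-1}=\mu$, i.e.
$$\mu^{2}-(s+1)\mu+\big(s-\|b\|^{2}\big)=0.$$
Since $s-\|b\|^{2}=|a_{0}|^{2}$, this is $\mu^{2}-(s+1)\mu+|a_{0}|^{2}=0$, whose larger root is $\mu_{+}=\tfrac{1}{2}\big[(s+1)+\sqrt{(s+1)^{2}-4|a_{0}|^{2}}\big]$.

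Finally I would confirm that $\mu_{+}$ dominates the degenerate eigenvalue $1$, i.e. $\mu_{+}\ge 1$: this reduces to $\sqrt{(s+1)^{2}-4|a_{0}|^{2}}\ge 1-s$, which holds trivially when $s\ge 1$ and, when $s<1$, is equivalent after squaring to $s\ge|a_{0}|^{2}$, true because $|a_{0}|^{2}\le s$. Taking the square root of $\mu_{+}$ then yields exactly the claimed expression for $\|C(p)\|$. The main obstacle is the careful bookkeeping in evaluating $C(p)C(p)^{*}$ together with the verification that the trivial eigenvalue $1$ does not exceed $\mu_{+}$; the reduction to a quadratic afforded by the bordered-identity structure is the crux, after which the remaining algebra is routine.
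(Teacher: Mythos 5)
Your proposal is correct. Note, however, that the paper does not prove this lemma at all: it is quoted verbatim from Kittaneh \cite[pp. 335--336]{K1}, where the full singular value decomposition of $C(p)$ is worked out. Your computation is essentially the standard one from that source: $C(p)C(p)^{*}$ is the bordered identity $\left(\begin{smallmatrix} s & b^{*}\\ b & I_{n-1}\end{smallmatrix}\right)$ with $s=\sum_{r=0}^{n-1}|a_r|^2$ and $\|b\|^2=s-|a_0|^2$, the eigenvalue $1$ has multiplicity $n-2$, and the remaining two eigenvalues satisfy $\mu^{2}-(s+1)\mu+|a_0|^{2}=0$; your check that the larger root dominates $1$ is also right. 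The only point worth flagging (not a gap, just bookkeeping) is the degenerate case $b=0$, i.e.\ $a_1=\cdots=a_{n-1}=0$: there the substitution $y=\frac{x}{\mu-1}b$ presupposes $\mu\neq 1$, but the matrix is then block diagonal with eigenvalues $s$ and $1$, and the stated formula still returns $\max(s,1)$, so the conclusion is unaffected. When $b\neq 0$ neither root of the quadratic equals $1$, so the argument is clean as written.
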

 
\begin{lemma} \label{lem-9}
 \[ \|C^2(p)\| \leq \left(1 + \sum \limits_{r=0}^{n-1}\left(|a_r|^2 +|b_r|^2 \right) \right)^\frac{1}{2}, \]
 where $b_r = a_{n-1}a_r-a_{r-1}$ for each $r=0,1,\ldots,n-1$ with $a_{-1} =0.$\\
\end{lemma}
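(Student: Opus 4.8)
The plan is to prove the bound by writing out the square $C^2(p)$ explicitly and then estimating its operator norm directly from its action on unit vectors, using the Cauchy--Schwarz inequality. Although the statement is quoted from \cite[Th. 2.1]{KS}, a short self-contained argument is available, and it parallels the elementary estimate $\|C(p)\| \le (1+\sum_{r}|a_r|^2)^{1/2}$ (the Carmichael--Mason-type bound) but applied one level up to $C^2(p)$.

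First I would determine the entries of $C^2(p)$. From the action of the companion matrix on the standard basis, $C(p)e_j = -a_{n-j}e_1 + e_{j+1}$ for $1\le j\le n-1$ and $C(p)e_n = -a_0 e_1$, a second application of $C(p)$ shows that $C^2(p)$ has a very specific structure: its first row is $(b_{n-1},b_{n-2},\ldots,b_0)$ with $b_r = a_{n-1}a_r - a_{r-1}$ and $a_{-1}=0$; its second row is $(-a_{n-1},-a_{n-2},\ldots,-a_0)$, which is exactly the first row of $C(p)$; and the remaining rows carry a shifted identity, that is, the $(j+2,j)$ entry equals $1$ for $1\le j\le n-2$ while all other entries vanish. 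I expect this computation to be the only real bookkeeping obstacle: one must get the shift of the indices on the $b_r$ right and handle the boundary convention $a_{-1}=0$ correctly; once the explicit form is secured, the rest is routine.

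Next I would use $\|C^2(p)\| = \sup_{\|x\|=1}\|C^2(p)x\|$. Writing $x=(x_1,\ldots,x_n)$ with $\|x\|=1$, the structure above splits the components of $C^2(p)x$ into three groups: the first component is the pairing of $x$ with the coefficient vector $(b_{n-1},\ldots,b_0)$, the second is (up to sign) the pairing of $x$ with $(a_{n-1},\ldots,a_0)$, and the last $n-2$ components simply reproduce $x_1,\ldots,x_{n-2}$. Hence
\[
\|C^2(p)x\|^2 \;\le\; \Big(\sum_{r=0}^{n-1}|b_r|^2\Big)\|x\|^2 \;+\; \Big(\sum_{r=0}^{n-1}|a_r|^2\Big)\|x\|^2 \;+\; \sum_{j=1}^{n-2}|x_j|^2,
\]
where the first two terms come from the Cauchy--Schwarz inequality applied to the two full rows, and the last term is bounded by $\|x\|^2=1$. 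Taking the supremum over all unit vectors $x$ then gives
\[
\|C^2(p)\|^2 \;\le\; 1 + \sum_{r=0}^{n-1}\big(|a_r|^2 + |b_r|^2\big),
\]
which is the desired inequality after taking square roots. I note that the three bounds cannot all be saturated by the same $x$, so the estimate is not expected to be sharp in general, but as an upper bound it holds for every unit vector and hence for the norm.
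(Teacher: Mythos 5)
Your argument is correct. Note, however, that the paper does not actually prove this lemma: it is quoted as a known result from \cite[Th. 2.1]{KS} (Kittaneh and Shebrawi), so what you have supplied is a self-contained verification rather than an alternative to a proof given in the text. Your computation of $C^2(p)$ checks out: applying $C(p)$ twice to the standard basis gives first row $(b_{n-1},b_{n-2},\ldots,b_0)$ with $b_r=a_{n-1}a_r-a_{r-1}$ and $a_{-1}=0$, second row $(-a_{n-1},\ldots,-a_0)$ (the first row of $C(p)$ itself), and a double shift below, i.e.\ $(C^2(p))_{j+2,\,j}=1$ for $1\le j\le n-2$ with all other entries zero. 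The norm estimate then works exactly as you describe: for a unit vector $x$ the last $n-2$ components of $C^2(p)x$ are $x_1,\ldots,x_{n-2}$, whose squared sum is at most $\|x\|^2=1$, while Cauchy--Schwarz applied to the two nontrivial rows contributes $\sum_r|b_r|^2$ and $\sum_r|a_r|^2$. That observation about the shift rows is the essential point --- it is what produces the constant $1$ instead of the $n-2$ that the cruder Hilbert--Schmidt (Frobenius-norm) bound would give --- and you have it right. The fact that the three individual bounds cannot be simultaneously saturated only means the inequality need not be sharp; it does not affect its validity.
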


Using Theorem \ref{theorem-2.4}, we now prove the following theorem.

\begin{theorem} \label{theorem-3.6}
Let $\lambda$ be a zero of $p(z).$Then 
$$ |\lambda| \leq w(C(p))\leq  \left(\frac{1}{4}\beta\sqrt{\alpha} + \frac{1}{4}\alpha + \frac{1}{2}\beta^2\right)^\frac{1}{2}=R_6,$$ where  

\begin{eqnarray*}
 \alpha &=& \left[1 + \sum \limits_{r=0}^{n-1}\left(|a_r|^2 +|b_r|^2 \right) \right]^\frac{1}{2},\\ 
 \beta &=& \left[\frac{1}{2}\left(1 + \sum \limits_{r=0}^{n-1}|a_r|^2 + \sqrt{\left(1+ \sum \limits_{r=0}^{n-1}|a_r|^2 \right)^2 -4|a_0|^2}\right) \right]^\frac{1}{2}, \\
b_r &=& a_{n-1}a_r-a_{r-1}~~\mbox{ for each } ~~r=0,1,\ldots,n-1 ~~\mbox{ and}~~ a_{-1} =0.
\end{eqnarray*}
\end{theorem}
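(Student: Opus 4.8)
The plan is to apply Theorem \ref{theorem-2.4} directly to the Frobenius companion matrix $C(p)$, regarded as a bounded linear operator on $\mathbb{C}^n$. Theorem \ref{theorem-2.4} gives the operator-theoretic inequality
\[
w^2(C(p)) \leq \frac{1}{4}\|C(p)\|\,\|C^2(p)\|^{\frac{1}{2}} + \frac{1}{4}\|C^2(p)\| + \frac{1}{2}\|C(p)\|^2,
\]
so the entire task reduces to inserting good closed-form estimates for the two quantities $\|C(p)\|$ and $\|C^2(p)\|$ that appear on the right-hand side. These are precisely the two quantities supplied by Lemma \ref{lem-8} and Lemma \ref{lem-9}.

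First I would set $\beta = \|C(p)\|$, which Lemma \ref{lem-8} evaluates exactly as the stated expression $\left[\frac{1}{2}\left(1 + \sum_{r=0}^{n-1}|a_r|^2 + \sqrt{(1+\sum_{r=0}^{n-1}|a_r|^2)^2 - 4|a_0|^2}\right)\right]^{1/2}$. Next I would set $\alpha = \|C^2(p)\|$; here Lemma \ref{lem-9} provides only an upper bound, $\|C^2(p)\| \leq (1 + \sum_{r=0}^{n-1}(|a_r|^2 + |b_r|^2))^{1/2}$ with $b_r = a_{n-1}a_r - a_{r-1}$. Since every term on the right-hand side of the inequality from Theorem \ref{theorem-2.4} is monotonically increasing in both $\|C(p)\|$ and $\|C^2(p)\|$, I can safely replace $\|C^2(p)\|$ by its upper bound $\alpha$ and $\|C(p)\|$ by its exact value $\beta$ without reversing the inequality. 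Substituting yields
\[
w^2(C(p)) \leq \frac{1}{4}\beta\sqrt{\alpha} + \frac{1}{4}\alpha + \frac{1}{2}\beta^2,
\]
which is exactly $R_6^2$. Combining with the standing inequality $|\lambda| \leq r(C(p)) \leq w(C(p))$ from the introduction completes the argument.

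The argument is essentially a substitution, so there is no serious obstacle; the only point requiring a moment's care is the monotonicity check that justifies replacing $\|C^2(p)\|$ by the one-sided bound $\alpha$. Because $\|C^2(p)\|$ enters the bound only through the increasing functions $x \mapsto \sqrt{x}$ and $x \mapsto x$ (with positive coefficients), this is immediate and needs no detailed computation. I would also note that no eigenvalue or spectral information about $C(p)$ beyond $r(C(p)) \leq w(C(p))$ is needed, so the theorem follows purely from the three cited results.
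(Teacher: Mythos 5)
Your proposal is correct and follows exactly the paper's proof: take $T=C(p)$ in Theorem \ref{theorem-2.4}, substitute the exact value of $\|C(p)\|$ from Lemma \ref{lem-8} and the upper bound for $\|C^2(p)\|$ from Lemma \ref{lem-9}, and use $|\lambda|\leq r(C(p))\leq w(C(p))$. The monotonicity remark you add is a sensible (if routine) point that the paper leaves implicit.
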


\begin{proof}
Taking $ T=C(p)$ in Theorem \ref{theorem-2.4} and using Lemma \ref{lem-8} and Lemma \ref{lem-9}, we get the required bound. 
\end{proof}

Next using Theorem \ref{theorem-2.5}, we prove the following theorem.

\begin{theorem} \label{theorem-3.7}
Let $\lambda$ be a zero of $p(z).$ Then $$ |\lambda| \leq w(C(p))\leq  \left(\frac{1}{2}\beta\sqrt{\alpha} + \frac{1}{4} \alpha + \frac{1}{4}\beta^2\right)^\frac{1}{2}=R_7, $$ where $\alpha$ and $\beta$ are same as in Theorem \ref{theorem-3.6}.
\end{theorem}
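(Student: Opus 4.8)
The plan is to mimic verbatim the one-line structure of the proof of Theorem~\ref{theorem-3.6}, only replacing the numerical radius bound of Theorem~\ref{theorem-2.4} by that of Theorem~\ref{theorem-2.5}. Concretely, I would set $T = C(p)$ in Theorem~\ref{theorem-2.5}, which gives
\[
w^2(C(p)) \leq \frac{1}{2}\|C(p)\|\,\|C^2(p)\|^{\frac{1}{2}} + \frac{1}{4}\|C^2(p)\| + \frac{1}{4}\|C(p)\|^2.
\]
Now I would invoke Lemma~\ref{lem-8} to substitute the exact value $\|C(p)\| = \beta$ and Lemma~\ref{lem-9} to substitute the upper estimate $\|C^2(p)\| \leq \alpha$. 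Since $\alpha$ and $\beta$ here are defined exactly as in Theorem~\ref{theorem-3.6}, these two lemmas supply precisely the quantities appearing in the claimed bound.

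The only point that requires a moment's verification is that replacing $\|C^2(p)\|$ by its upper bound $\alpha$ indeed preserves the inequality in the right direction. This is immediate: in the right-hand side of Theorem~\ref{theorem-2.5} the operator $\|T^2\|$ enters only through the two terms $\tfrac{1}{2}\|T\|\,\|T^2\|^{1/2}$ and $\tfrac{1}{4}\|T^2\|$, both of which are monotonically increasing in $\|T^2\|$; and by monotonicity of the square root we also have $\|C^2(p)\|^{1/2} \leq \sqrt{\alpha}$. Hence substituting the bound $\|C^2(p)\| \leq \alpha$ can only increase the right-hand side, yielding
\[
w^2(C(p)) \leq \frac{1}{2}\beta\sqrt{\alpha} + \frac{1}{4}\alpha + \frac{1}{4}\beta^2.
\]

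Taking the positive square root gives the stated bound $w(C(p)) \leq R_7$, and combining with the inequality $|\lambda| \leq w(C(p))$ for every zero $\lambda$ of $p(z)$ (established in the introduction via $r(C(p)) \leq w(C(p))$ together with the fact that the eigenvalues of $C(p)$ are exactly the zeros of $p$) completes the argument. I do not anticipate any genuine obstacle: the entire proof is a direct specialization of a previously established numerical radius inequality, and all the nontrivial analytic content has already been packaged into Theorem~\ref{theorem-2.5}, Lemma~\ref{lem-8}, and Lemma~\ref{lem-9}. The sole care needed is the elementary monotonicity check noted above.
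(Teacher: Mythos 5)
Your proposal is correct and follows exactly the paper's own argument: substitute $T=C(p)$ into Theorem \ref{theorem-2.5}, then replace $\|C(p)\|$ by its exact value $\beta$ from Lemma \ref{lem-8} and $\|C^2(p)\|$ by its upper bound $\alpha$ from Lemma \ref{lem-9}. The monotonicity check you include is the only (elementary) point the paper leaves implicit, so there is nothing further to add.
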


\begin{proof}
Taking $ T=C(p)$ in Theorem \ref{theorem-2.5}, and using Lemma \ref{lem-8} and Lemma \ref{lem-9} we get the desired bound. 
\end{proof}

Our last theorem in this section is the following one.

\begin{theorem} \label{theorem-3.8}
Let $\lambda$ be a zero of $p(z).$Then $$ |\lambda| \leq w(C(p))\leq \left[\frac{3}{4}\alpha + \frac{1}{4} \beta^2 \right]^\frac{1}{2}=R_8, $$ where
$\alpha$ and $\beta$ are same as in Theorem \ref{theorem-3.6}.
\end{theorem}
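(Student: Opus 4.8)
The plan is to apply Theorem~\ref{theorem-2.6} to the Frobenius companion matrix exactly as Theorems~\ref{theorem-3.6} and~\ref{theorem-3.7} applied Theorems~\ref{theorem-2.4} and~\ref{theorem-2.5}. First I would set $T=C(p)$ in Theorem~\ref{theorem-2.6}, which states that $w^2(T)\leq \frac{3}{4}\|T^2\|+\frac{1}{4}\|T\|^2$ for any $T\in B(\mathbb{H})$. Since the zeros of $p(z)$ are precisely the eigenvalues of $C(p)$, we already have $|\lambda|\leq r(C(p))\leq w(C(p))$ from the introduction, so it suffices to bound $w(C(p))$.

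Next I would substitute the two norm estimates supplied by the lemmas. By Lemma~\ref{lem-9} we have $\|C^2(p)\|\leq\bigl(1+\sum_{r=0}^{n-1}(|a_r|^2+|b_r|^2)\bigr)^{1/2}$, which is exactly the quantity $\alpha$ defined in Theorem~\ref{theorem-3.6}; and by Lemma~\ref{lem-8} we have $\|C(p)\|$ equal to the quantity $\beta$ defined there, so $\|C(p)\|^2=\beta^2$. Because Theorem~\ref{theorem-2.6} is monotone increasing in both $\|T^2\|$ and $\|T\|^2$, replacing $\|C^2(p)\|$ by its upper bound $\alpha$ and $\|C(p)\|^2$ by $\beta^2$ preserves the inequality, giving
\[
w^2(C(p))\leq \frac{3}{4}\alpha+\frac{1}{4}\beta^2,
\]
and taking square roots yields $w(C(p))\leq\bigl[\frac{3}{4}\alpha+\frac{1}{4}\beta^2\bigr]^{1/2}=R_8$, which is the claim.

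There is essentially no obstacle here: the theorem is a direct specialization, and the only point requiring a word of care is the matching of notation, namely verifying that the $\alpha$ from Lemma~\ref{lem-9} and the $\beta$ from Lemma~\ref{lem-8} coincide with the $\alpha,\beta$ declared in the statement (they are inherited verbatim from Theorem~\ref{theorem-3.6}). Thus a complete proof reads: taking $T=C(p)$ in Theorem~\ref{theorem-2.6} and applying Lemma~\ref{lem-8} and Lemma~\ref{lem-9} yields the desired bound.
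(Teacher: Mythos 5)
Your proposal is correct and follows exactly the paper's own argument: take $T=C(p)$ in Theorem \ref{theorem-2.6} and substitute the bounds from Lemma \ref{lem-8} and Lemma \ref{lem-9} for $\|C(p)\|$ and $\|C^2(p)\|$. The only addition you make is to spell out the monotonicity justification for the substitution, which the paper leaves implicit.
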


\begin{proof}
Taking $ T=C(p)$ in Theorem \ref{theorem-2.6} and using Lemma \ref{lem-8} and Lemma \ref{lem-9} we get the required bound for zeros of $p(z)$. 
\end{proof}

We illustrate with a numerical example to show that the  bounds for zeros of a  polynomial obtained by us in Theorem \ref{theorem-3.6}, Theorem \ref{theorem-3.7}, Theorem \ref{theorem-3.8} are better than the existing bounds.

\begin{example}
We consider a polynomial $p(z)=z^5+2z^4+z^3+z^2+z+1$. Then the upper bounds for the zeros of this polynomial $p(z)$ estimated by different mathematicians are as shown in the following table.
\end{example}
\begin{center}
  \begin{tabular}{ |c|c| } 
\hline
$R_A$ & 3.0183 \\
\hline
 $R_{CM}$ & 3.0000\\
 \hline
$R_C$ &  3.0000 \\ 
\hline 
 $R_{FK}$ & 3.2802  \\
\hline 
$R_{K_1}$ & 3.0000\\
\hline
$R_{K_2}$ & 2.8552\\
\hline
$R_{AK}$ & 3.0670 \\
\hline
\end{tabular}
\end{center}
But for the polynomial  $p(z)=z^5+2z^4+z^3+z^2+z+1$, we have $R_6=2.7129$, $R_7=2.6086$ and $R_8=2.4437$. This shows that for this example, our bounds obtained in Theorem \ref{theorem-3.6}, Theorem \ref{theorem-3.7} and Theorem \ref{theorem-3.8}  are better than all the estimations mentioned above.

\bibliographystyle{amsplain}

\end{document}